\newcommand{\Z}{{\mathbb Z}}
\newcommand{\Cay}{{\rm Cay}}
\newcommand{\CAY}{\underline{{\rm Cay}}}
\newcommand{\Aut}{{\rm Aut}}
\newcommand{\Hom}{{\rm End}}
\definecolor{lgrey}{HTML}{C8C8C5}
\definecolor{dgrey}{HTML}{919494}
\newcommand{\lgrey}[1]{\textcolor{lgrey}{#1}}
\newcommand{\dgrey}[1]{\textcolor{dgrey}{#1}}
\newtheorem{thm}{Theorem}[section]
\newtheorem{prop}[thm]{Proposition}
\newtheorem{lem}[thm]{Lemma}
\newtheorem*{conj*}{Conjecture}
\theoremstyle{definition}
\newtheorem{rem}[thm]{Remark}
\newtheorem{quest}{Problem}
\title{On planar right groups}
\author{Kolja Knauer \and Ulrich Knauer}
\begin{document}

\maketitle

\begin{abstract}
In 1896 Heinrich Maschke characterized planar finite groups, that is
groups which admit a generating system such that the resulting
Cayley graph is planar. In our study we consider the question, which
finite \emph{semigroups} have a planar Cayley graph. Right
groups are a class of semigroups relatively close to groups. We
present a complete characterization of planar right groups.
\end{abstract}

\section{Introduction}
Cayley graphs of groups and semigroups enjoy a rich research history and are a classic point of interaction of Graph Theory and Algebra. While on the one hand knowledge about the group or semigroup gives information about the Cayley graph, on the other hand properties of the Cayley graph can be translated back to the algebraic objects and their attributes. A particular use of Cayley graphs is simply to \emph{visualize} a given group or semigroup by drawing the graph. It is therefore interesting to find Cayley graphs that can be drawn respecting certain aesthetic, geometric or topological criteria and then characterizing which groups or semigroups have Cayley graphs that can be drawn in such a way. Here we will focus on topological criteria, more precisely, on embeddability of the graph into orientable surfaces. The theoretic appeal of such type of question is that they now lie in the intersection of Graph Theory, Algebra, and Topology.

A group is called \emph{planar} if it admits a generating system
such that the resulting Cayley graph is planar, that is, it admits a
plane drawing. See Figure~\ref{fig:A42} for examples. Finite planar groups were characterized by
Maschke~\cite{Mas-96}, see Table~\ref{table}. There has been
considerable work towards a characterization of infinite planar
groups, see e.g.~\cite{Dro-06,Geo-11}.

 \begin{figure}[ht]
\begin{center}
  
%
  \psfrag{a}{~}
  \psfrag{b}{~}
  \psfrag{c}{~}
  \psfrag{d}{~}
  \psfrag{f}{~}
  \psfrag{g}{~}
  \psfrag{h}{~}
  \psfrag{i}{~}
  \psfrag{j}{~}
  \psfrag{k}{~}
  \psfrag{l}{~}
  \psfrag{e}{~}

     \includegraphics[width = \textwidth]{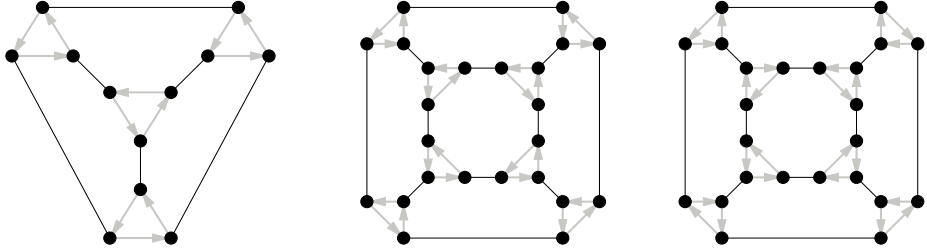}
    \caption{From left to right: the plane Cayley graphs $\Cay(A_4,\{(12)(34),\lgrey{(123)}\})$, $\Cay(S_4,\{\lgrey{(123)},(34)\})$, and $\Cay(\mathbb{Z}_2 \times A_4, \{ \lgrey{(0,(123))},(1,(12)(34)) \}) $.}\label{fig:A42}
\end{center}
 \end{figure}

With an analogous definition one might ask for planar semigroups.
Zhang studied planar Clifford semigroups~\cite{Zha-08}. Solomatin
characterized planar products of cyclic semigroups~\cite{Sol-06} and
described finite free commutative semigroups and some other types,
which are outerplanar~\cite{Sol-11}. For this he uses his own
planarity results, which are not easily accessible. We looked at toroidal right groups~\cite{Kna-10} but still no characterization
is known. In the present
paper we characterize planar right groups. Some of the
results have already been announced in~\cite{Kna-11}.

\bigskip

The paper is structured as follows:

  In Section~\ref{sec:pre} we introduce first notions of Cayley graphs of semigroups.
    In Section~\ref{sec:Maschke} we review Maschke's characterization of finite planar groups.
    In Section~\ref{sec:planar} we construct planar embeddings of those right groups which in the end will turn out to be exactly the planar ones.
    Section~\ref{sec:planarfromplanar} we study how Cayley graphs of the group are reflected in the Cayley graph of a right group. In particular, we reduce the set of right groups that have to be checked for planarity.
    In Section~\ref{sec:nonplanar} we prove that all candidates for planar right groups not shown to be planar before are not planar and therefore conclude the characterization.
    We close the paper with stating the characterization Theorem~\ref{thm:charact} and a set of open problems in Section~\ref{sec:con}.

\section{Preliminaries}\label{sec:pre}
Given a semigroup $S$ and a set $C\subseteq S$ we call the directed
multigraph $\Cay(S,C)=(V,A)$ with vertex set $V=S$ and directed arcs
$(s,sc)$ for all $s\in S$ and $c\in C$ the \emph{directed right
Cayley graph} or just the \emph{Cayley graph} of $S$ with
\emph{connection set} $C$. In drawings of Cayley graphs we will generally use different shades of gray for arcs corresponding to different elements of $C$. If a pair of vertices is connected by an arc in both directions we simply draw an undirected edge between them. See Figure~\ref{fig:A42} for an example.  

The \emph{genus} $\gamma(M)$ of an orientable surface $M$ is its number of handles. The \emph{genus} $\gamma(\Gamma)$ of a (directed) graph $\Gamma$ is the minimum genus over all surfaces it can be embedded in. The \emph{genus} $\gamma(S)$ of a semigroup $S$ is the minimum genus over all Cayley graphs $\Cay(S,C)$ where $C$ is a generating system of $S$.
We say that $S$ is \emph{planar} if $\gamma(S)=0$, i.e., there
exists a generating set $C$ of $S$ such that $\Cay(S,C)$ has a crossing-free drawing in the plane.

Clearly, when considering genus we may ignore edge-directions,
multiple edges and loops. We call the resulting simple undirected
graph the \emph{underlying graph} and denote it by $\CAY(S,C)$.

Given an undirected graph $\Gamma$ and a subset $E$ of the edges $E(\Gamma)$ of $\Gamma$, the \emph{deletion} of $E$ in $\Gamma$, denoted by $\Gamma\setminus E$, is simply the graph obtained from $\Gamma$ by suppressing all edges in $E$.
Further, we denote by $\Gamma/E$ the \emph{contraction} of $E$ in $\Gamma$. This is, the graph that arises from $\Gamma$ by identifying vertices that are adjacent via an edge in $E$ and deleting loops afterwards. A graph $\Gamma'$ is a \emph{minor} of $\Gamma$ if $\Gamma'$ can be obtained by a sequence of deletions and contractions from $\Gamma$. If $\Gamma'$ is obtained only using deletions or only contractions, we say that $\Gamma'$ is a \emph{deletion-minor} and \emph{contraction-minor}, respectively.  
Similarly, one defines the notions of deletion, contraction, and minors in directed graphs.

Given a graph $\Gamma$ we denote by $\Aut(\Gamma)$ its group of automorphisms and by $\Hom(\Gamma)$ its monoid of endomorphisms. 

The \emph{contraction lemma} due to Babai~\cite{Bab-73,Bab-77} is the following:
\begin{lem}~\label{lem:babai}
 Let $\Gamma$ be a connected graph and $G<\Aut(\Gamma)$. If the left action of $G$ on $\Gamma$ is fixpoint-free,
 then there exists $E\subset E(\Gamma)$ and a generating  set $C$ of $G$ such that $\CAY(G,C)\cong\Gamma/E$.
\end{lem}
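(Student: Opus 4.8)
The plan is to exploit that a fixpoint-free action is \emph{semiregular}: since no nonidentity element of $G$ fixes a vertex, all vertex-stabilizers are trivial and every orbit of $G$ on $V(\Gamma)$ has cardinality $|G|$. Consequently any set of vertices meeting each orbit exactly once is a transversal $T$, and the translates $\{g\cdot T : g\in G\}$ partition $V(\Gamma)$. I would look for $E\subseteq E(\Gamma)$ whose connected components, viewed as the spanning subgraph $(V(\Gamma),E)$, are exactly the blocks of such a translate-partition. Contracting such an $E$ then yields a graph $\Gamma/E$ whose vertex set is canonically in bijection with $G$ via $g\mapsto g\cdot T$, on which the action of $G$ (still well defined, as $E$ is $G$-invariant) is left multiplication, hence regular. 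A connected graph admitting a regular group of automorphisms isomorphic to $G$ is isomorphic to a Cayley graph $\CAY(G,C)$ — this is Sabidussi's theorem, with $C=\{g\in G:\text{the base vertex is adjacent to its }g\text{-image}\}=C^{-1}$; and since $\Gamma/E$ is connected, being a contraction of the connected graph $\Gamma$, the set $C$ generates $G$. So everything comes down to exhibiting $E$.

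To this end, pass to the quotient multigraph $\Gamma/G$ on the orbits of $G$, which is connected because $\Gamma$ is. Fix a spanning tree $\mathcal T$ of $\Gamma/G$ and lift it, together with a choice of one vertex per orbit, to a subtree $F$ of $\Gamma$: start at an arbitrary vertex of $\Gamma$, and whenever a breadth-first exploration of $\mathcal T$ crosses an edge from an already reached orbit to a new orbit $O'$, note that $F$ already contains a vertex $v$ of the source orbit and that every edge of $\Gamma/G$ incident to $p(v)$ lifts to an edge of $\Gamma$ incident to $v$ (any lift can be translated onto $v$ by an element of $G$, since $v$ lies in the source orbit); adjoin such a lift and its new endpoint, which lies in $O'$, to $F$. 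The outcome is a subtree $F$ containing exactly one vertex of each orbit. Now set $E:=G\cdot E(F)$.

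Finally I would verify that the components of $(V(\Gamma),E)$ are precisely the translates $g\cdot V(F)$, $g\in G$: each $g\cdot F$ is connected and spans $g\cdot V(F)$; both endpoints of every edge of $F$ lie in $V(F)$, so every edge of $E$ joins two vertices of a single block $g\cdot V(F)$ and no edge of $E$ runs between distinct blocks; and the blocks $g\cdot V(F)$ partition $V(\Gamma)$, since the action is free and $V(F)$ is a transversal. Hence $\Gamma/E$ has exactly $|G|$ vertices, identified $G$-equivariantly with $G$ acting by left multiplication, and the reduction of the first paragraph finishes the proof. (Minor points: $\Gamma/G$ may carry loops and parallel edges, which is harmless when choosing $\mathcal T$; and contraction may introduce parallel edges, harmless for the underlying simple graph and for genus.) I expect the lifting step — realizing a section of the orbit map simultaneously with a spanning tree of $\Gamma/G$ — to be the only genuine obstacle; the remaining assertions are routine verifications.
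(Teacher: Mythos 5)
Your proposal is correct. The paper does not prove Lemma~\ref{lem:babai} at all --- it is quoted as Babai's contraction lemma with a citation --- and your argument is essentially Babai's original one: since a fixpoint-free action is semiregular, one lifts a spanning tree of the orbit quotient $\Gamma/G$ to a subtree $F$ of $\Gamma$ meeting each orbit exactly once, takes $E=G\cdot E(F)$ so that the components of $(V(\Gamma),E)$ are the translates $g\cdot V(F)$, and applies Sabidussi's theorem to the regular action of $G$ on the connected quotient $\Gamma/E$. All the verifications you flag as routine (the blocks partition $V(\Gamma)$, no edge of $E$ crosses blocks, $C=C^{-1}$ generates because $\Gamma/E$ is connected) do go through as stated.
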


A directed graph is called \emph{strongly connected} if for each pair $u,v$ there is a directed path from $u$ to $v$. Since for a minor $\Gamma'$ of $\Gamma$ one generally has $\gamma(\Gamma')\leq\gamma(\Gamma)$, we can apply Lemma~\ref{lem:babai} in order to obtain the following useful lemma:
\begin{lem}~\label{lem:semibabai}
 Let $S$ be a semigroup and $C\subseteq S$ such that $\Cay(S,C)$ is strongly connected.
 Then for every subgroup $G$ of $S$ we have $\gamma(G)\leq\gamma(\Cay(S,C))$.
\end{lem}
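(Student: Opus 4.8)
The plan is to reduce the statement to Babai's contraction lemma (Lemma~\ref{lem:babai}), applied to a suitable induced subgraph of $\Cay(S,C)$. Fix a subgroup $G\le S$ and let $e\in G$ be its identity; then $e$ is an idempotent of $S$ and $e=ee\in eS$. I would work inside the left ideal $eS$: if $s=es$ then $sc=esc\in eS$ for every $c\in C$, so $eS$ is closed under the right action of $C$, and the induced sub-digraph $\Delta:=\Cay(S,C)[eS]$ consists exactly of the arcs $(s,sc)$ with $s\in eS$ and $c\in C$. Strong connectivity of $\Cay(S,C)$ forces $\Delta$ to be strongly connected as well, since a directed path from $s$ to $t$ in $\Cay(S,C)$ meets only vertices of the form $sc_1\cdots c_i$, which lie in $eS$ whenever $s$ does. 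Hence the underlying simple graph $\Gamma$ of $\Delta$ is connected; being a subgraph of $\CAY(S,C)$, it satisfies $\gamma(\Gamma)\le\gamma(\CAY(S,C))=\gamma(\Cay(S,C))$.

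Next I would let $G$ act on $\Gamma$ by left translations $\lambda_g\colon s\mapsto gs$. For $g\in G$ and $s\in S$ we have $gs=(eg)s=e(gs)\in eS$, so $\lambda_g$ maps $eS$ into $eS$; and on $eS$ the composite $\lambda_{g^{-1}}\circ\lambda_g$ sends $s$ to $(g^{-1}g)s=es=s$, so $\lambda_g$ restricts to a bijection of $eS$ with inverse $\lambda_{g^{-1}}$. Since $\lambda_g$ carries an arc $(s,sc)$ of $\Delta$ to the arc $(gs,gsc)$, it is an automorphism of $\Delta$, hence of $\Gamma$. Moreover $g\mapsto\lambda_g$ is a homomorphism $G\to\Aut(\Gamma)$, and it is injective because $g$ is recovered from $\lambda_g$ via $\lambda_g(e)=ge=g$.

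The main point — and the only place where strong connectivity is genuinely used — is that this action is fixpoint-free. To see this, suppose $gs=s$ for some $g\in G$ and $s\in eS$. By strong connectivity there is a directed path in $\Cay(S,C)$ from $s$ to $e$, which exhibits an element $d\in S$ (a product of elements of $C$) with $sd=e$; then $g=ge=g(sd)=(gs)d=sd=e$. Thus $G$ embeds as a fixpoint-free subgroup of $\Aut(\Gamma)$, and Lemma~\ref{lem:babai} provides an edge set $E\subseteq E(\Gamma)$ and a generating set $C'$ of $G$ with $\CAY(G,C')\cong\Gamma/E$. Since a contraction is a minor and minors do not increase the genus, $\gamma(G)\le\gamma(\CAY(G,C'))=\gamma(\Gamma/E)\le\gamma(\Gamma)\le\gamma(\Cay(S,C))$, which is the claim. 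I expect the only real subtlety to be the fixpoint-freeness step; the rest is bookkeeping with left ideals, together with the facts that induced subgraphs and minors do not raise the genus and that $\gamma(G)$ is the minimum of $\gamma(\Cay(G,C'))$ over all generating sets $C'$.
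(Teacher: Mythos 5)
Your proof is correct and follows essentially the same route as the paper's: let $G$ act by left translations, show the action is by automorphisms and is fixpoint-free using a directed path from a putative fixpoint into $G$, and then invoke Babai's contraction lemma together with the fact that minors do not increase genus. The only cosmetic difference is your restriction to the left ideal $eS$, which tidies up the verification that $\lambda_{g^{-1}}$ inverts $\lambda_g$ but is in fact vacuous here: strong connectivity makes every $s\in S$ reachable from $e$, so $s=ec_1\cdots c_n\in eS$ and hence $eS=S$.
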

\begin{proof}
 For any $g\in G$ and $(s,sc)\in E(\Cay(S,C))$ by definition we have $(gs,gsc)\in E(\Cay(S,C))$.
 Hence, $g\in \Hom(\Cay(S,C))$. Clearly, $g^{-1}$ is the inverse mapping of $g$. Thus, $g\in \Aut(\Cay(S,C))$.
 To see that $G$ acts fixpoint-free, suppose $gs=s$ for some $s\in S$.
 Now for every out-neighbor $sc$ of $s$ we have $gsc=sc$, i.e., $sc$ is also a fixpoint of $g$.
 Choose a directed path $P$ from $s$ to some $h\in G$. We obtain $gh=h$, which implies $g=e$.
 Hence the left action of $G$ on $\Cay(S,C)$ is fixpoint-free.
 This implies that the same holds with respect to $\CAY(S,C)$.
 Thus, we may apply Lemma~\ref{lem:babai} and obtain some generating system,
 $C'$ of $G$ such that $\CAY(G,C')$ is a contraction-minor of $\CAY(S,C)$.
 Since contraction cannot increase the genus of a graph the claim follows.
\end{proof}

The above works also for the infinite case. Nevertheless, we are
only interested in the finite case. So, Lemma~\ref{lem:semibabai}
justifies to review the characterization of finite planar groups due
to Maschke~\cite{Mas-96}, which we will do in the next section.

\section{Maschke's Theorem -- a closer look}\label{sec:Maschke}

Before stating the theorem let us introduce some standard group notation:


%
%


$\Z_n=\{0,\dots,n-1\}$.  Note that $\Z_m\times \Z_n \cong \Z_{mn}$
if $gcd(m,n)=1$. In particular, $\Z_2\times \Z_2 \cong D_2$.

$D_n$ the dihedral group. The elements of $D_n$ are the symmetries
of the $n$-gon with the vertices $1,\dots,n$, that is $|D_n|=2n$. By
$\langle 13\rangle$  we denote the reflection around the axis
through 2, by $\langle 12\rangle$ the reflection around the axis
through the middle line between 1 and 2. The rotation by 1 is
denoted by $(1\dots n)$. Note that $\Z_2\times D_n \cong D_{2n}$, if
$n$ is odd.

$A_n, S_n$ the alternating group and the symmetric group,
respectively, on the $n$ points $1,\dots,n$, $n\le 5$. For their
elements we use the cycle notation.

The identity element is denoted by $e$ for all groups $G$ except for $\mathbb{Z}_n$, where we rather use $0$.

%
%
%
%
%
%
%
%
%
%
%
%
%
\begin{thm}[Maschke's Theorem]
 The groups and minimal generating systems in Table~\ref{table} are exactly those pairs having a planar Cayley graph.
 \end{thm}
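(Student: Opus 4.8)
The plan is to prove Maschke's theorem by combining a list of explicit planar drawings (sufficiency) with a minor-based obstruction argument (necessity), and I would organize it exactly along the section structure announced in the introduction. For the \emph{sufficiency} direction, for each pair $(G,C)$ appearing in Table~\ref{table} I would exhibit a concrete crossing-free drawing of $\CAY(G,C)$ in the plane. For the cyclic groups $\Z_n$ this is trivial (a cycle or a path with a chord), for the dihedral groups $D_n$ one uses the standard prism- or antiprism-type drawing (two concentric $n$-gons joined by the reflection edges), and for the three sporadic groups $A_4$, $S_4$, $A_5$ (and their products with $\Z_2$) one can either produce the drawing directly from the polytope skeletons — these are, up to the connection set, the graphs of the tetrahedron/truncated tetrahedron, the cuboctahedron/truncated octahedron, and the icosahedron/dodecahedron — or simply cite the classical fact that these are skeleta of convex $3$-polytopes and invoke Steinitz's theorem; Figure~\ref{fig:A42} already illustrates several of these. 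This part is essentially a finite check.

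For the \emph{necessity} direction, I would argue that any other pair $(G,C)$ with $C$ a minimal generating set yields a non-planar Cayley graph. The natural engine is Kuratowski/Wagner: exhibit a $K_5$ or $K_{3,3}$ minor in $\CAY(G,C)$. The first reduction is to bound $|C|$: if $|C|\ge 4$ then every vertex of $\CAY(G,C)$ has degree at least $4$ (in fact the graph is vertex-transitive of degree $\ge 4$ once we account for inverses), and using vertex-transitivity together with Euler's formula one forces a forbidden minor for all but finitely many small groups, which are then handled by hand. For $|C|\le 3$ the graph has bounded degree, and one exploits the rigid local structure: the relations among at most three generators, together with connectivity, force the girth and face structure, and an Euler-formula count $|E|\le 3|V|-6$ versus $|E| = \tfrac12\sum\deg$ eliminates all groups except those on the list. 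Here Lemma~\ref{lem:semibabai} is useful in the reverse role: since in a group $\Cay(G,C)$ is strongly connected, every subgroup inherits planarity, so once we know a particular group is non-planar, all groups containing it (with compatible generating data) are too — this lets us reduce to checking minimal non-planar witnesses.

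The main obstacle I anticipate is the \emph{necessity} direction for generating sets of size $2$ or $3$, where the graph is sparse enough that no cheap Euler-characteristic argument suffices and one must genuinely understand the group. Concretely, the delicate cases are the groups that are ``just barely'' non-planar: $Q_8$, $\Z_2\times\Z_2\times\Z_2$, small dihedral-by-$\Z_2$ groups, $A_4\times\Z_2$ with the wrong generating set, and the various $\Z_n\times\Z_m$. For these I would construct the $K_{3,3}$- or $K_5$-subdivision explicitly, using the Cayley-graph structure to trace the branch vertices and paths, and I would lean on vertex- and arc-transitivity to reduce the search to a single fundamental cell of the drawing. A secondary subtlety is the minimality hypothesis on $C$: one must check that relaxing minimality does not produce new planar graphs, which again follows because adding a redundant generator only adds edges to an already vertex-transitive graph and hence cannot decrease the genus. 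Assembling these cases, together with the explicit drawings from the sufficiency direction, yields the exact correspondence with Table~\ref{table}.
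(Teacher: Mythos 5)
First, a point of order: the paper does not prove Maschke's Theorem --- it is a classical result imported from Maschke's 1896 paper, and Section~\ref{sec:Maschke} only records the statement and Table~\ref{table} because Lemma~\ref{lem:semibabai} makes the classification of planar groups the necessary input for the right-group characterization. So there is no in-paper proof to compare yours against, and I can only assess your sketch on its own terms.

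The sufficiency half (exhibit the drawings, or invoke Steinitz on the polytope skeleta) is fine and genuinely is a finite check. The necessity half has a real gap: you treat it as ``Euler's formula plus finitely many delicate cases,'' but the pairs $(G,C)$ with $C$ minimal of size $2$ or $3$ range over \emph{infinitely many} groups, and you give no mechanism that reduces this to a finite list. For $|C|=2$ the underlying graph is at most $4$-regular, so the raw bound $|E|\le 3|V|-6$ is never violated; whether a refined (girth or triangle-counting) bound applies depends on which relators of $G$ create short faces, and carrying that analysis out uniformly over all $2$-generated finite groups is exactly the hard content of the theorem, not a routine afterthought ($\Z_3\times\Z_3$ and $Q_8$ are instructive test cases: $4$-regular, Euler silent, yet non-planar). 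Your claim that $|C|\ge 4$ forces degree $\ge 4$ and hence a forbidden minor is false as stated: the octahedron and the icosahedron are $4$- and $5$-regular planar vertex-transitive Cayley graphs and both appear in Table~\ref{table}. The missing idea is the structural step that makes the classification terminate: a planar embedding of $\Cay(G,C)$ for finite $G$ can be normalized so that $G$ acts on the $2$-sphere preserving the embedding; the face orbits correspond to relators, and Euler's formula applied to the quotient yields a Diophantine constraint of the type $\tfrac1p+\tfrac1q+\tfrac1r>1$ whose solutions are precisely the cyclic, dihedral and polyhedral groups together with their $\Z_2$-extensions. This is the content of the remark in Section~\ref{sec:con} that the planar groups are exactly the finite subgroups of $O(3)$; without it your case analysis never closes. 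A smaller slip: the minimality hypothesis is not a removable convenience that ``follows because adding edges cannot decrease genus'' --- $\CAY(\Z_{2n},\{1,2\})$ is a planar antiprism whose generating system is non-minimal and which is deliberately excluded from Table~\ref{table}, so the theorem really is a statement about minimal generating systems only.
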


\begin{table}
\begin{center}

 \begin{tabular}{c|c|c|c|c} 
 Group & Generators&$|V|$&$|E|$ &"Cayley" solid\\
 \hline \hline &&&& \\
$\Z_n$ & $1$&$n$&n  & $n$-gon
\\[.5ex]\hline &&&&
\\$\Z_2\times\Z_2$ & $(1,0),(0, 1)$&4&4&$2$-prism
\\$\Z_2\times\Z_{4}$ & $(1,0),(0,1)$&8&12&$\mathbf{cube}$
\\$ \Z_2\times \Z_n$&$(1,0),(0,1)$&$2n$ &$3n$& $n$-prism
\\[.5ex]\hline &&&&
\\$D_3$ & $(123),(12)$ &6 &9& 3-prism
\\$(\cong S_3)$ & $(123),(12),(23)$ & 6&12 &$\mathbf{octahedron}$
\\$D_4$ & $(1234),(13)$ &8 &12 &  $\mathbf{cube}$
\\$D_n$ & $\langle 12\rangle, \langle 13\rangle$ &$2n$& $2n$ & $2n$-gon
\\&$(1\dots n),\langle 12\rangle$ &&$3n$ & $n$-prism
\\[.5ex]\hline & & &&
\\$\Z_2\times D_{2n}$ &$(1,e),(0,\langle 12\rangle),(0,\langle 13\rangle)$&$4n$&$6n$&$2n$-prism
\\[.5ex]\hline && & &
\\ $A_4$& $(123),(12)(34)$&12&18&truncated tetrahedron
\\ & $(123),(234)$&&24&cuboctahedron
\\& $(123),(234),(13)(24)$&&30&$\mathbf{icosahedron}$
\\[.5ex]\hline&&&&
\\ $\Z_2\times A_4$ & $(0,(123)),(1,(12)(34))$&24&36& truncated cube
\\[.5ex]\hline & & &&
\\$S_4$ & $(123),(34)$&24&36&truncated cube
\\ & $(12),(23),(34)$&&36&truncated octahedron
\\ & $(12),(1234)$&&36&truncated octahedron
\\ & $(123),(1234)$&&48&rhombicuboctahedron
\\ & $(1234),(123),(34)$&&60&snub cuboctahedron
\\[.5ex]\hline & && &
\\$\Z_2\times S_4$ & $(1,(12))$,&48&72&(rhombi)truncated
\\& $(0,(23)),(0,(34))$&&&cuboctahedron
\\[.5ex]\hline&&& &
\\$A_5$&$(124),(23)(45)$ &60&90&truncated dodecahedron
\\&$(12345),(23)(45)$& &90&truncated icosahedron
\\&$(12345),(124)$ &&120&rhombicosidodecahedron
\\&$(12345),(124),(23)(45)$&&150 &snub icosidodecahedron
\\[.5ex]\hline& && &
\\$\Z_2\times A_5$&$(1,(12)(35))$,&120&180 &(rhombi)truncated
\\& $(1,(24)(35)),(1,(23)(45))$&& &icosidodecahedron

\end{tabular}
\end{center}
\caption{The planar groups, their minimal generating systems giving planar Cayley graphs, and the three-dimensional solids these graphs are the graphs of.}\label{table}
\end{table}



 \begin{figure}[ht]
\begin{center}
    \includegraphics[width = \textwidth]{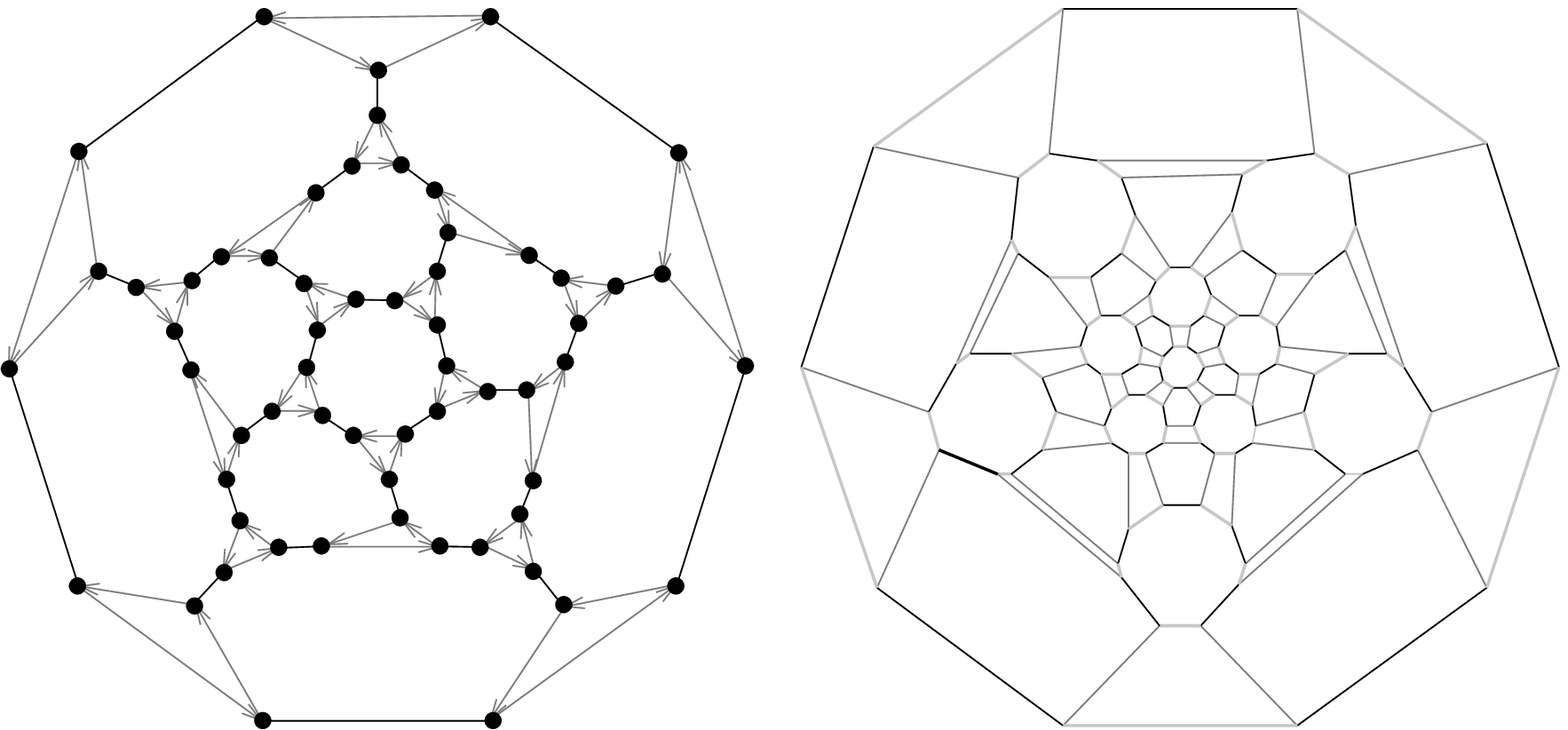}
    \caption{The two plane Cayley graphs $\Cay(A_5,\{\dgrey{(124)},(23)(45)\})$ and \newline
        $\Cay(\Z_2\times A_5,\{(1,(12)(35)),\dgrey{(1,(24)(35))},\lgrey{(1,(23)(45))}\})$.}\label{fig:A5}
\end{center}
 \end{figure}


\section{Planar right groups}\label{sec:planar}
The \emph{right zero band} on $k$ elements is the semigroup $R_k$ on
the set $\{r_1,\ldots ,r_k\}$ such that $r_ir_j:=r_j$ for all
$i,j\in[k]$, where through the entire paper we will denote $[k]:=\{1, \ldots, k\}$. A semigroup $S$ is called  a \emph{right group} if it
is isomorphic to the product $G\times R_k$ of a group and a right
zero band. See Figure~\ref{fig:CD} for examples.

On the way to characterize planar right groups we start
with positive results in this section.

\begin{figure}[ht]
\begin{center}
\psfrag{0,1}{\tiny{$(0,r_1)$}}
\psfrag{1,1}{\tiny{$(1,r_1)$}}
\psfrag{2,1}{\tiny{$(2,r_1)$}}
\psfrag{3,1}{\tiny{$(3,r_1)$}}
\psfrag{4,1}{\tiny{$(4,r_1)$}}
\psfrag{5,1}{\tiny{$(5,r_1)$}}
\psfrag{0,2}{\tiny{$(0,r_2)$}}
\psfrag{1,2}{\tiny{$(1,r_2)$}}
\psfrag{2,2}{\tiny{$(2,r_2)$}}
\psfrag{3,2}{\tiny{$(3,r_2)$}}
\psfrag{4,2}{\tiny{$(4,r_2)$}}
\psfrag{5,2}{\tiny{$(5,r_2)$}}
\psfrag{0,3}{\tiny{$(0,r_3)$}}
\psfrag{1,3}{\tiny{$(1,r_3)$}}
\psfrag{2,3}{\tiny{$(2,r_3)$}}
\psfrag{3,3}{\tiny{$(3,r_3)$}}
\psfrag{4,3}{\tiny{$(4,r_3)$}}
\psfrag{5,3}{\tiny{$(5,r_3)$}}
\psfrag{e,1}{\tiny{$(e,r_1)$}}
\psfrag{a,1}{\tiny{$(a,r_1)$}}
\psfrag{b,1}{\tiny{$(b,r_1)$}}
\psfrag{babab,1}{\tiny{$(a,r_1)$}}
\psfrag{baba,1}{\tiny{$(ab,r_1)$}}
\psfrag{bab,1}{\tiny{$(bab,r_1)$}}
\psfrag{ba,1}{\tiny{$(ba,r_1)$}}
\psfrag{e,2}{\tiny{$(e,r_2)$}}
\psfrag{a,2}{\tiny{$(a,r_2)$}}
\psfrag{b,2}{\tiny{$(b,r_2)$}}
\psfrag{babab,2}{\tiny{$(a,r_2)$}}
\psfrag{baba,2}{\tiny{$(ab,r_2)$}}
\psfrag{bab,2}{\tiny{$(bab,r_2)$}}
\psfrag{ba,2}{\tiny{$(ba,r_2)$}}
\psfrag{e,3}{\tiny{$(e,r_3)$}}
\psfrag{a,3}{\tiny{$(a,r_3)$}}
\psfrag{b,3}{\tiny{$(b,r_3)$}}
\psfrag{babab,3}{\tiny{$(a,r_3)$}}
\psfrag{baba,3}{\tiny{$(ab,r_3)$}}
\psfrag{bab,3}{\tiny{$(bab,r_3)$}}
\psfrag{ba,3}{\tiny{$(ba,r_3)$}}

\includegraphics[width = \textwidth]{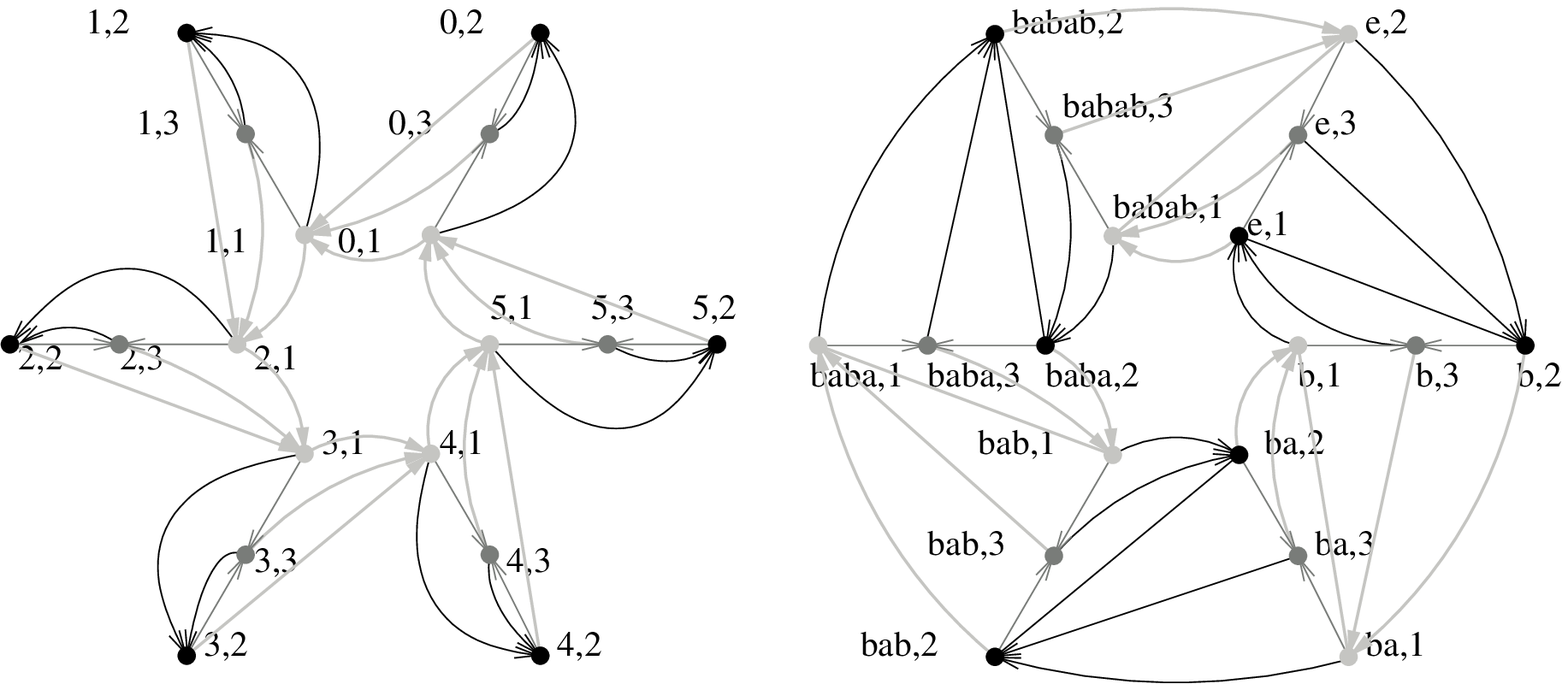}
\caption{Plane Cayley graphs $\Cay(\Z_6\times R_3,\{\lgrey{(1,r_1)},(0,r_2),\dgrey{(0,r_3)}\})$ and $\Cay(D_3\times
R_3,\{\lgrey{(a,r_1)},(b,r_2),\dgrey{(e,r_3)}\})\}$. Vertex colors stand for $\lgrey{G\times \{r_1\}}$, $G\times \{r_2\}$, and $\dgrey{G\times \{r_3\}}$, respectively.}\label{fig:CD}
\end{center}
\end{figure}

\begin{rem}\label{rem:leftgroup}{\rm Analogously one considers a \emph{left zero band}
$L_k=\{l_1,\dots,l_k\}$ on $k$ elements such that $l_il_j:=l_i$ for
all $i,j\in [k]$. Now consider the \emph{left group} $L_k\times G$
whose generating systems always have the form $L_k\times C$ where
$C$ is a generating system of $G$. The right Cayley graph
$\CAY(L_k\times G,L_k\times C)$ consists of $k$ copies of $\CAY(G,C)$.
 Consequently, a left group $L_k\times G$ is planar if and only if
 the group $G$ is planar, for arbitrary $k\in\mathbb{N}$.}
 \end{rem}

\begin{lem}\label{lem:ZD}
If $G\in\{\mathbb{Z}_n, D_n\}$ then $G\times R_2$ and $G\times R_3$ are planar.
\end{lem}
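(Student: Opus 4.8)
The plan is to give explicit planar drawings of the Cayley graphs $\CAY(G\times R_k,C)$ for a well-chosen generating system $C$, treating $\Z_n$ and $D_n$ separately and $k\in\{2,3\}$ separately, guided by the examples in Figure~\ref{fig:CD}. The underlying idea in all cases is that $G\times R_k$ decomposes as $k$ disjoint ``layers'' $G\times\{r_i\}$, and a generating element $(g,r_i)$ sends an arbitrary vertex $(h,r_j)$ to $(hg,r_i)$; in particular, the arc-labels behave like the group elements but the target layer is always $r_i$, independent of the source. So the task is to pick a generator in each needed layer so that the resulting graph can be laid out in the plane.

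For $G=\Z_n$ with $R_2$: take $C=\{(1,r_1),(0,r_2)\}$. The arcs labelled $(1,r_1)$ form, on each layer, an $n$-cycle landing in layer $1$, and on layer $1$ they form the Cayley $n$-cycle of $\Z_n$; the arcs labelled $(0,r_2)$ identify-match vertex $(h,r_1)$ with $(h,r_2)$ and vertex $(h,r_2)$ with itself (a loop, hence ignored in $\CAY$). Concretely, $\CAY(\Z_n\times R_2,C)$ is obtained from the $n$-prism (two concentric $n$-gons on layers $1,2$ joined by a perfect matching) — wait, more carefully: layer $1$ carries the $n$-cycle, layer $2$ carries no intra-layer edges, and there is a matching between the layers coming from $(1,r_1)$-arcs out of layer-$2$ vertices together with the $(0,r_2)$-arcs; a routine check shows the underlying graph is planar (draw the layer-$1$ $n$-cycle as a circle, hang each layer-$2$ vertex just outside the corresponding region). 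For $R_3$ add the generator $(0,r_3)$: layer $3$ again gets only pendant-type attachments, and one checks the graph is still planar by placing layer $3$ in the outer face. For $G=D_n$ one uses $C=\{(a,r_1),(b,r_2),(\,\cdot\,,r_3)\}$ with $a,b$ a standard two-element generating set of $D_n$ (the two reflections $\langle 12\rangle,\langle 13\rangle$, whose Cayley graph of $D_n$ is a $2n$-cycle) so that the intra-layer structure is again a single cycle, and repeat the layering argument.

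The key steps, in order, are: (1) fix the generating set in each of the four cases $(\Z_n,R_2),(\Z_n,R_3),(D_n,R_2),(D_n,R_3)$; (2) describe the combinatorial structure of $\CAY(G\times R_k,C)$ layer by layer, identifying which arcs are loops (and hence vanish in the underlying graph) and which give a perfect matching between two layers; (3) exhibit the plane drawing — start from a plane drawing of the relevant group Cayley graph on one ``hub'' layer (an $n$-gon for $\Z_n$, a $2n$-gon or $n$-prism for $D_n$, both planar by Maschke), then show each additional layer consists of vertices that can be placed into distinct faces and joined to the hub without crossings; (4) verify via Euler's formula or direct inspection that no crossings were forced. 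I would present this with a picture for each case (as the paper already does in Figure~\ref{fig:CD} for two of them) and keep the verbal verification short.

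I expect the main obstacle to be step (3) in the $D_n\times R_3$ case: with three layers and $D_n$ itself needing a somewhat intricate planar layout, one must be careful that the faces of the hub drawing accommodate all the extra vertices of the other two layers simultaneously without the matchings crossing each other. The $\Z_n$ cases and $D_n\times R_2$ should be routine once the layering picture is set up; the real content is checking that the three-layer dihedral picture closes up planarly, which is best done by displaying the explicit embedding (cf.\ Figure~\ref{fig:CD}, right) and observing that the outer layer vertices each sit alone in a face of the two-layer subgraph.
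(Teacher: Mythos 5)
Your proposal is correct and follows essentially the same route as the paper: both choose a generating system with one generator per $R_k$-component (the cycle generator of $\Z_n$ in layer $1$, respectively the two reflections of $D_n$ split over layers $1$ and $2$, plus the identity in layer $3$) so that the group structure reduces to a single cycle, and both ultimately rest on exhibiting the explicit plane drawings of Figure~\ref{fig:CD}, with the $R_2$ case obtained by dropping the $(e,r_3)$ generator. The only cosmetic difference is that you build the $R_2$ embedding directly rather than observing it as a subgraph of the $R_3$ one.
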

\begin{proof}
 If $G=\mathbb{Z}_n$ with generator $b$ then $\CAY(\mathbb{Z}_n,\{b\})$ is a cycle.
 Now $D:=\{(b,r_1), (e,r_2), (e,r_3)\}$ is a generating system of $\mathbb{Z}_n\times R_3$.
 A plane drawing of $\Cay(\mathbb{Z}_n\times R_3,D)$ is on the left of Figure~\ref{fig:CD}
 for the case $n=6$.

 If $G=D_n$ with two degree two generators $a,b$  then again $\CAY(D_n,\{a,b\})$
 is a cycle and $D:=\{(a,r_1), (b,r_2), (e,r_3)\}$ is a generating system of $D_n\times R_3$.
 A plane drawing of $\Cay(D_n\times R_3,D)$ is shown on the right of Figure~\ref{fig:CD} for the case $n=3$.
 
 Now, for $G\in\{\mathbb{Z}_n, D_n\}$ and $S':=G\times R_2$ note that $D':=D\backslash \{(e,r_3)\}$ is a generating system of $S'$ and $\Cay(S',D')$ is
 a subgraph of $\Cay(S,D)$. Thus, it is also planar.
\end{proof}

\begin{lem}\label{lem:local}
 Let $G$ be a group with generating system $C=\{a,b\}$ with $a^2=e$ and $b^2\neq e$.
 If $\Cay(G,C)$ has an embedding on a surface $M$ such that for each $a$-edge
  the $b$-arcs incident with it alternate in direction when ordered by the local
  rotation systems, see the left of Figure~\ref{fig:local1},
  then $\gamma(G\times R_2), \gamma(G\times R_3)\leq\gamma(M)$.
\end{lem}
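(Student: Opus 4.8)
The plan is to take the given embedding of $\Cay(G,C)$ on $M$ and ``thicken'' the $a$-edges so as to accommodate the extra copies of $G$ coming from $R_2$ and $R_3$, without adding genus. Recall that $\Cay(G\times R_k, \widetilde C)$ for a suitable generating set decomposes vertex-wise into $k$ copies $G\times\{r_i\}$ of the group, and the band structure forces the arcs between different copies to behave very rigidly: an element $(c,r_j)\in\widetilde C$ sends $(g,r_i)$ to $(gc,r_j)$ regardless of $i$. So I would choose the generating system $\widetilde C:=\{(a,r_1),(b,r_2),(e,r_3)\}$ for $G\times R_3$ (and $\{(a,r_1),(b,r_2)\}$ for $G\times R_2$) in analogy with Lemma~\ref{lem:ZD}; here the hypotheses $a^2=e$, $b^2\ne e$ are what make these generating sets work and give $a$-edges (undirected) versus $b$-arcs (properly directed), matching the picture on the left of Figure~\ref{fig:local1}.

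The key geometric step is local and is exactly what the alternation hypothesis is for. Fix an $a$-edge $e_a=\{g, ga\}$ of $\Cay(G,C)$ in the embedding on $M$, and look at the four arc-ends incident to its two endpoints coming from $b$: by hypothesis, reading around $g$ and around $ga$ in the rotation system, the incoming and outgoing $b$-arcs alternate. I would replace $e_a$ by a thin disk (a ``rectangle'') whose long sides run parallel to $e_a$; inside and along this rectangle I place the three vertices $(g,r_1),(g,r_2),(g,r_3)$ near $g$ and $(ga,r_1),(ga,r_2),(ga,r_3)$ near $ga$, together with all the arcs prescribed by $\widetilde C$ that are ``internal'' to the $a$-structure — namely the $(a,r_1)$-arcs (which, since $a^2=e$, pair the copies $r_1$ up along $e_a$) and the $(e,r_3)$-arcs — and route them within the rectangle. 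Because the rectangle is a disk, this internal routing is planar and contributes no genus. The alternation condition guarantees that the $b$-arcs leaving the six new vertices can be pulled out of the two short ends of the rectangle in an order consistent with the original rotation at $g$ and at $ga$, so that they can be matched up with the (thickened) $b$-edges elsewhere.

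Finally I would globalize: do the above rectangle-replacement simultaneously at every $a$-edge, and correspondingly thicken each $b$-arc of $\Cay(G,C)$ into a band carrying the three parallel $(b,r_2)$-type connections between consecutive rectangles. Since the copy $G\times\{r_2\}$ is connected through $b$ and the copy $G\times\{r_1\}$ through $a$, and $(e,r_3)$ links $r_3$ to $r_1$, one checks $\widetilde C$ generates and the resulting drawing realizes $\Cay(G\times R_3,\widetilde C)$; deleting the $(e,r_3)$-part leaves $\Cay(G\times R_2,\{(a,r_1),(b,r_2)\})$ as a subgraph, handling that case too. Crucially, every modification is supported in a disk neighbourhood of an edge of the original embedding, so the genus does not increase and $\gamma(G\times R_2),\gamma(G\times R_3)\le\gamma(M)$. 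The main obstacle I anticipate is the bookkeeping in the local step: verifying that ``alternating $b$-directions along each $a$-edge'' is precisely the combinatorial condition under which the six new vertices and their $b$-arcs can be inserted into the two rotations at $g$ and $ga$ without forcing a crossing — in other words, that the rotation system of the enlarged graph can be made to agree with that of $M$ away from the rectangles. This is where Figure~\ref{fig:local1} is doing the real work, and a careful case check of how the two ``$r_2$'' $b$-arcs at each vertex interleave with the ``$r_1$'' and ``$r_3$'' arcs is needed.
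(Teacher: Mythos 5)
Your proposal is correct and follows essentially the same route as the paper: the same generating system $\{(a,r_1),(b,r_2),(e,r_3)\}$, the same blow-up of each $a$-edge into a rectangle carrying the extra copies of its endpoints (with the alternation hypothesis ensuring the $b$-arcs can be routed inside disk neighbourhoods of the original edges, hence without increasing genus), and the same passage to $G\times R_2$ by deleting the $(e,r_3)$-part of the generating set. The local bookkeeping you flag as the remaining obstacle is exactly what the paper delegates to Figure~\ref{fig:local1}.
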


\begin{figure}[ht]
\begin{center}

\psfrag{e}{~}
\psfrag{b}{~}
\psfrag{a}{~}
\psfrag{ab}{~}
\psfrag{ab-1}{~}
\psfrag{b-1}{~}
\psfrag{e,1}{~}
\psfrag{b,1}{~}
\psfrag{a,1}{~}
\psfrag{ab,1}{~}
\psfrag{ab-1,1}{~}
\psfrag{b-1,1}{~}
\psfrag{e,2}{~}
\psfrag{b,2}{~}
\psfrag{a,2}{~}
\psfrag{ab,2}{~}
\psfrag{ab-1,2}{~}
\psfrag{b-1,2}{~}
\psfrag{e,3}{~}
\psfrag{b,3}{~}
\psfrag{a,3}{~}
\psfrag{ab,3}{~}
\psfrag{ab-1,3}{~}
\psfrag{b-1,3}{~}

\includegraphics[width = .7\textwidth]{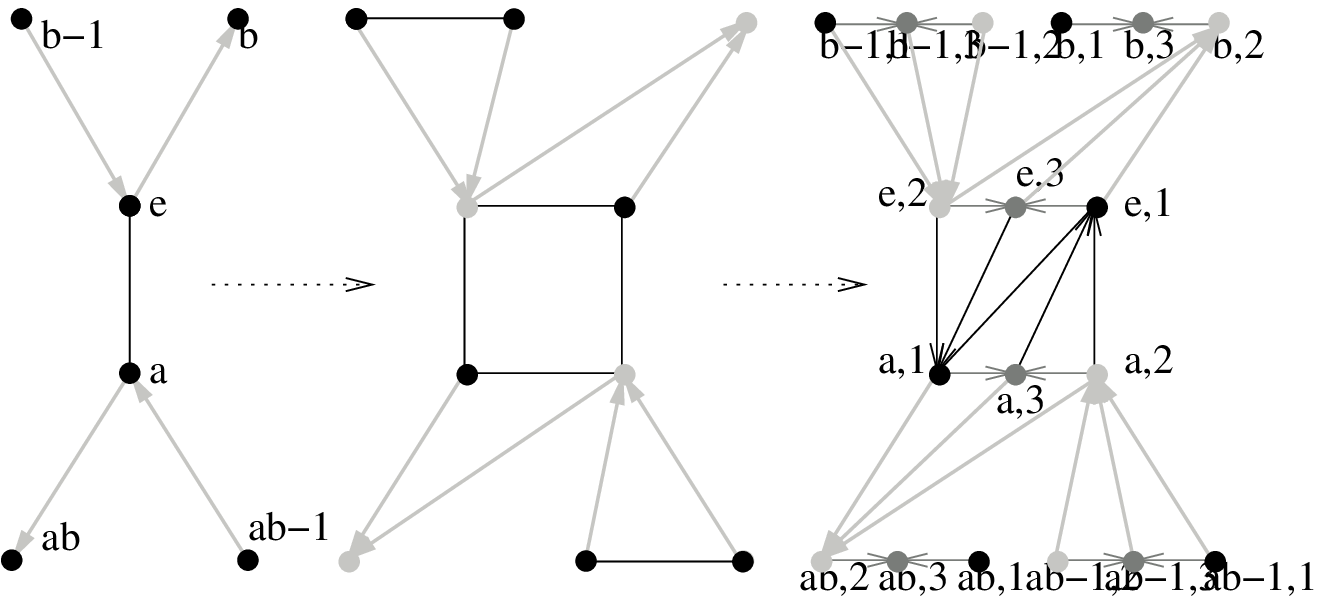}
\caption{Left: Local
configuration of $b$-arcs (gray) around an $a$-edge (black). Middle: Topologically relevant part
of the transformation. Right: Completion to graph of right group.
Arc and vertex colors stand for elements of $G\times \{r_1\}$, $\lgrey{G\times \{r_2\}}$, and $\dgrey{G\times \{r_3\}}$, respectively.}\label{fig:local1}
\end{center}
\end{figure}

\begin{proof}
 We first show the statement for $S:=G\times R_3$. As a generating system we use $D:=\{(a,r_1), (b,r_2), (e,r_3)\}$.
 The property of our embedding of $\Cay(G,C)$ on $M$ makes it
 possible to blow up
 $a$-edges to rectangles such that incoming
 $b$-arcs are attached to one pair of opposite vertices and outgoing $b$-arcs to an entire side of the rectangle each, see the middle of Figure~\ref{fig:local1}.
  This blown up graph can be completed to the wanted embedding of $\Cay(S,D)$. This
 is shown on
 the right of Figure~\ref{fig:local1}.

 For $S':=G\times R_2$ note that $D':=D\backslash \{(e,r_3)\}$ is a generating system of $S'$ and $\Cay(S',D')$
 is a subgraph of $\Cay(S,D)$. Thus, $\Cay(S',D')$ embeds in $M$. An example for this construction in the case $S':=G\times R_2$ and $G=A_4$ is depicted in Figure~\ref{fig:A4R22}.

 However, similarly one can see that choosing $D''=\{(e,r_1),(a,r_2),(b,r_2)\}$, also yields a graph $\Cay(S',D''')$
 which can be embedded into $M$. This constructions for the case $S':=G\times R_2$ is exemplified with $G=A_4$ in Figure~\ref{fig:A4R21}.
\end{proof}

\begin{figure}[ht]
\begin{center}
\psfrag{e,1}{\tiny{$(e,r_1)$}}
\psfrag{123,1}{\tiny{$((123),r_1)$}}
\psfrag{132,1}{\tiny{$((132),r_1)$}}
\psfrag{134,1}{\tiny{$((134),r_1)$}}
\psfrag{142,1}{\tiny{$((142),r_1)$}}
\psfrag{143,1}{\tiny{$((143),r_1)$}}
\psfrag{241,1}{\tiny{$((241),r_1)$}}
\psfrag{342,1}{\tiny{$((342),r_1)$}}
\psfrag{324,1}{\tiny{$((324),r_1)$}}
\psfrag{1234,1}{\tiny{$((12)(34),r_1)$}}
\psfrag{1324,1}{\tiny{$((13)(24),r_1)$}}
\psfrag{1432,1}{\tiny{$((14)(32),r_1)$}}
\psfrag{e,2}{\tiny{$(e,r_2)$}}
\psfrag{123,2}{\tiny{$((123),r_2)$}}
\psfrag{132,2}{\tiny{$((132),r_2)$}}
\psfrag{134,2}{\tiny{$((134),r_2)$}}
\psfrag{142,2}{\tiny{$((142),r_2)$}}
\psfrag{143,2}{\tiny{$((143),r_2)$}}
\psfrag{241,2}{\tiny{$((241),r_2)$}}
\psfrag{342,2}{\tiny{$((342),r_2)$}}
\psfrag{324,2}{\tiny{$((324),r_2)$}}
\psfrag{1234,2}{\tiny{$((12)(34),r_2)$}}
\psfrag{1324,2}{\tiny{$((13)(24),r_2)$}}
\psfrag{1432,2}{\tiny{$((14)(32),r_2)$}}

\includegraphics[width = .8\textwidth]{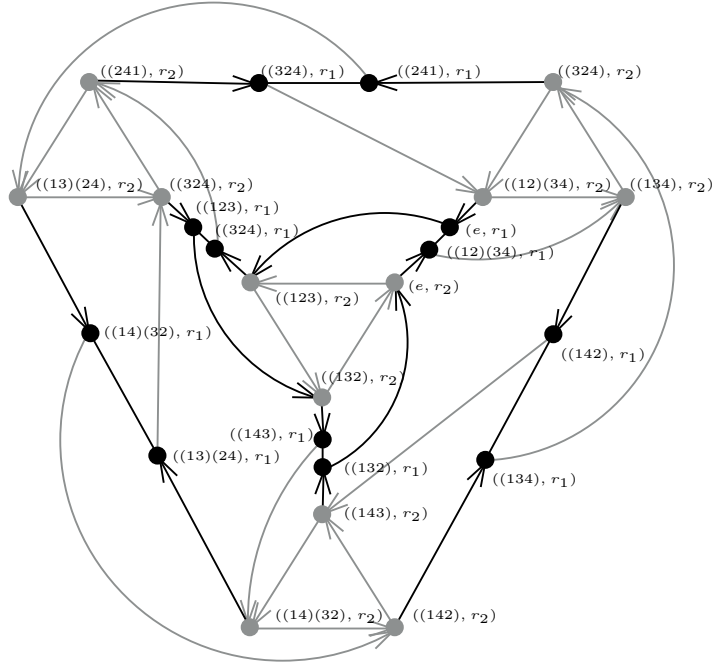}
\caption{The plane Cayley graph $\Cay(A_4\times R_2,\{((12)(34),r_1),\dgrey{(123),r_2)})\})$. This exemplifies the principal construction in Lemma~\ref{lem:local} and generalizes to $A_4\times R_3$. Vertex colors stand for elements of $G\times \{r_1\}$ and $\dgrey{G\times \{r_2\}}$, respectively.}\label{fig:A4R22}
\end{center}
\end{figure}

\begin{figure}[ht]
\begin{center}
\psfrag{e,1}{\tiny{$(e,r_1)$}}
\psfrag{123,1}{\tiny{$((123),r_1)$}}
\psfrag{132,1}{\tiny{$((132),r_1)$}}
\psfrag{134,1}{\tiny{$((134),r_1)$}}
\psfrag{142,1}{\tiny{$((142),r_1)$}}
\psfrag{143,1}{\tiny{$((143),r_1)$}}
\psfrag{241,1}{\tiny{$((241),r_1)$}}
\psfrag{342,1}{\tiny{$((342),r_1)$}}
\psfrag{324,1}{\tiny{$((324),r_1)$}}
\psfrag{1234,1}{\tiny{$((12)(34),r_1)$}}
\psfrag{1324,1}{\tiny{$((13)(24),r_1)$}}
\psfrag{1432,1}{\tiny{$((14)(32),r_1)$}}
\psfrag{e,2}{\tiny{$(e,r_2)$}}
\psfrag{123,2}{\tiny{$((123),r_2)$}}
\psfrag{132,2}{\tiny{$((132),r_2)$}}
\psfrag{134,2}{\tiny{$((134),r_2)$}}
\psfrag{142,2}{\tiny{$((142),r_2)$}}
\psfrag{143,2}{\tiny{$((143),r_2)$}}
\psfrag{241,2}{\tiny{$((241),r_2)$}}
\psfrag{342,2}{\tiny{$((342),r_2)$}}
\psfrag{324,2}{\tiny{$((324),r_2)$}}
\psfrag{1234,2}{\tiny{$((12)(34),r_2)$}}
\psfrag{1324,2}{\tiny{$((13)(24),r_2)$}}
\psfrag{1432,2}{\tiny{$((14)(32),r_2)$}}

\includegraphics[width = .8\textwidth]{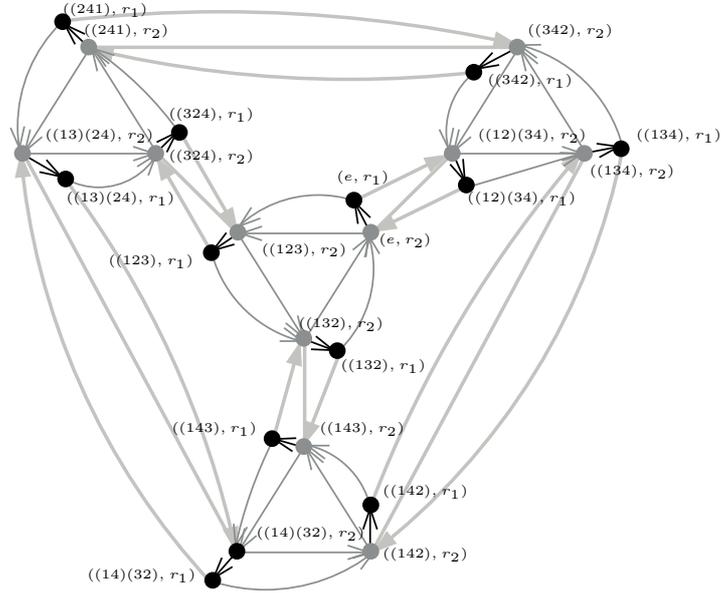}
\caption{The graph $\Cay(A_4\times R_2,\{(e,r_1),\lgrey{((12)(34),r_2)},\dgrey{(123),r_2)})\})$. This exemplifies the alternative construction for $G\times R_2$ mentioned in the proof of Lemma~\ref{lem:local}. Vertex colors stand for elements of $G\times \{r_1\}$ and $\dgrey{G\times \{r_2\}}$, respectively.}\label{fig:A4R21}
\end{center}
\end{figure}

\begin{thm}\label{thm:positiv}
 If $G\in\{\{e\},\mathbb{Z}_n, D_n, S_4, A_4, A_5\}$ then $G\times R_k$
and $\{e\}\times R_{k'}$ are planar for $k\leq 3$ and $k'\leq 4$.
\end{thm}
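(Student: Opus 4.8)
The plan is to run through the finitely many relevant values of $k$ and $k'$ separately, each time reducing to a result already available — Maschke's Theorem, Lemma~\ref{lem:ZD}, or Lemma~\ref{lem:local} — together with the elementary fact that the complete graph $K_m$ is planar if and only if $m\le 4$.

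For $G=\{e\}$: in a right zero band $r_ir_j=r_j$, so $\langle C\rangle=C$ for every subset $C$, and hence the unique generating system of $\{e\}\times R_{k'}\cong R_{k'}$ is $\{r_1,\dots,r_{k'}\}$ itself; its underlying Cayley graph joins $r_i$ and $r_ir_j=r_j$ by an edge for all $i\ne j$, so $\CAY(R_{k'},R_{k'})=K_{k'}$, which is planar precisely for $k'\le 4$ (in particular $\{e\}\times R_k$ is planar for $k\le 3$). For $G\in\{\mathbb{Z}_n,D_n\}$: the case $k=1$ is Maschke's Theorem since $G\times R_1\cong G$, and $k\in\{2,3\}$ is exactly Lemma~\ref{lem:ZD}.

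It remains to treat $G\in\{S_4,A_4,A_5\}$ with $k\in\{2,3\}$ ($k=1$ again being Maschke's Theorem). For each of these groups Table~\ref{table} provides a two-element generating set $C=\{a,b\}$ with $\Cay(G,C)$ planar, in which $a$ is an involution and $b$ has order three; concretely one takes
\[
(a,b)=\big((34),(123)\big),\qquad \big((12)(34),(123)\big),\qquad \big((23)(45),(124)\big)
\]
for $S_4$, $A_4$, $A_5$, so that $\Cay(G,C)$ is the graph of the truncated tetrahedron, the truncated cube, and the truncated dodecahedron, drawn in Figures~\ref{fig:A42} and~\ref{fig:A5}. Since $a^2=e$ and $b^2\neq e$, Lemma~\ref{lem:local} reduces the whole statement to the claim that, in these plane embeddings, the $b$-arcs incident with any $a$-edge alternate in direction with respect to the rotation system; granting this, $G\times R_2$ and $G\times R_3$ are planar, which finishes the proof.

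The only genuinely nontrivial step is this local check, but it is a bounded inspection. Each of the three graphs is $3$-connected, so its plane embedding is unique up to reflection — which reverses all rotations and hence preserves the alternation property — and the left translation action of $G$ is transitive on the $a$-edges, so it suffices to examine a single $a$-edge. Here $G$ has the triangle-group presentation $\langle a,b\mid a^2,\,b^3,\,(ab)^m\rangle$ with $m=3,4,5$ respectively, and its Cayley graph is the corresponding spherical tessellation: the triangular faces are the $b$-orbits, and every other face is a $2m$-gon whose edges alternate between $a$-edges and $b$-edges. Tracing such a face, one sees that the two $b$-edges it uses at the two endpoints of a given $a$-edge are oppositely oriented — one directed into its endpoint, the other out of its endpoint; since the two faces meeting along that $a$-edge split the pair of $b$-edges at each of its endpoints, the four $b$-arcs around the edge must alternate, as required. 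Equivalently, one may simply exhibit the resulting plane Cayley graph of $G\times R_2$, as is done for $A_4$ in Figures~\ref{fig:A4R22} and~\ref{fig:A4R21}, and add the $(e,r_3)$-arcs for $G\times R_3$; the pictures for $S_4$ and $A_5$ are obtained in the same way.
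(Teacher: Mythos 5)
Your proof follows the paper's own argument essentially verbatim: the complete graph $K_{k'}$ for the trivial group, Lemma~\ref{lem:ZD} for $\mathbb{Z}_n$ and $D_n$, and Lemma~\ref{lem:local} applied to the plane drawings of Figures~\ref{fig:A42} and~\ref{fig:A5} for $S_4$, $A_4$, $A_5$; your only addition is a correct combinatorial verification, via the $(2,3,m)$-triangle-group face structure, of the alternation hypothesis that the paper leaves to inspection of the figures. One cosmetic slip: the truncated tetrahedron belongs to $A_4$ and the truncated cube to $S_4$, not the other way round.
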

\begin{proof}
 For $\mathbb{Z}_n$ and $D_n$ this was proved in Lemma~\ref{lem:ZD}.
 For the remaining groups this follows by their plane drawings provided in Figure~\ref{fig:A42} and
 Figure~\ref{fig:A5} and applying Lemma~\ref{lem:local}. Moreover, $\CAY(\{e\}\times R_{k'}, R_{k'})$ is isomorphic to the
 complete graph on $k'$ vertices $K_{k'}$.
 Together this yields the claim.
\end{proof}

It remains to show that the list of planar right groups in
Theorem~\ref{thm:positiv} is complete.

\section{Planar right groups come from planar groups}\label{sec:planarfromplanar}
For $C\subseteq G\times R_k$ we denote the projections of $C$ on the
respective factors by $\pi_G(C):=\{g\in G\mid \exists j\in
[k]:(g,r_j)\in C\}$ and $\pi_{R_k}(C):=\{r_j\in R_k\mid \exists g\in
G:(g,r_j)\in C\}$. We start with the following basic lemma, which
does not hold in general products of groups and semigroups:

\begin{lem}\label{lem:strong}
Let $C\subseteq S=G\times R_k$, then the following are equivalent \begin{itemize}
\item[(i)] $C$ generates $S$,
\item[(ii)] $\pi_G(C)$ generates $G$ and $\pi_{R_k}(C)$ generates $R_k$,
\item[(iii)] $\Cay(S,C)$ is strongly connected.
\end{itemize}
\end{lem}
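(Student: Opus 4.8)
The plan is to prove the equivalences in the cycle (i) $\Rightarrow$ (ii) $\Rightarrow$ (iii) $\Rightarrow$ (i), using the explicit description of multiplication in $S=G\times R_k$, namely $(g,r_i)(h,r_j)=(gh,r_j)$.

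For (i) $\Rightarrow$ (ii): Suppose $C$ generates $S$. First I would observe that the projection $\pi_G\colon S\to G$ and $\pi_{R_k}\colon S\to R_k$ are semigroup homomorphisms (this is immediate from the product structure), so the image of $\langle C\rangle$ under a homomorphism is generated by the image of $C$; hence $\pi_G(C)$ generates $\pi_G(S)=G$ and $\pi_{R_k}(C)$ generates $\pi_{R_k}(S)=R_k$. (Note $\pi_{R_k}(C)$ generating $R_k$ just means $\pi_{R_k}(C)=R_k$, since in a right zero band any product equals its last factor, so the only generating set of $R_k$ is $R_k$ itself.)

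For (ii) $\Rightarrow$ (iii): Here the content is showing strong connectivity of $\Cay(S,C)$. Fix two vertices $(g,r_i)$ and $(h,r_j)$. I would first move within the ``layer'' indexed by $r_i$: since $\pi_G(C)$ generates $G$, we can write $g^{-1}h$ as a product $c_1\cdots c_m$ with each $c_\ell=\pi_G(x_\ell)$ for some $x_\ell=(c_\ell,r_{t_\ell})\in C$; but right-multiplying $(g,r_i)$ by $x_1$ gives $(gc_1,r_{t_1})$ — the second coordinate jumps to $r_{t_1}$, which seems to derail the argument. The fix is the standard trick for right groups: since $\pi_{R_k}(C)=R_k$, there is some $(y,r_i)\in C$ for \emph{every} index, in particular we can always ``reset'' the second coordinate; more cleanly, the subgraph of arcs coming from elements $(c,r)\in C$ reaching a fixed layer, together with the fact that from any vertex we can realize any element of $G$ as a net displacement of the first coordinate while landing in any prescribed layer (concatenate a walk realizing the desired group element, then one more arc $(y,r_j)\in C$ to fix the layer to $r_j$, absorbing $\pi_G(y)$ into the group walk). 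So: choose a walk in $G$ from $g$ to $h\cdot\pi_G(y)^{-1}$ using generators from $\pi_G(C)$, lift it arc by arc in $\Cay(S,C)$ starting at $(g,r_i)$ (the second coordinates wander but that is harmless), arriving at $(h\pi_G(y)^{-1},r_s)$ for some $s$, then append the arc for $(y,r_j)$ to reach $(h,r_j)$. This gives a directed path, so $\Cay(S,C)$ is strongly connected.

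For (iii) $\Rightarrow$ (i): If $\Cay(S,C)$ is strongly connected then every vertex is reachable from every other by a directed path, and a directed path from $s$ to $t$ spells out an element $w$ of $\langle C\rangle$ with $sw=t$; in particular, taking $s=(g,r_i)$ and letting $t$ range over all of $S$, and using that $\langle C\rangle$ is closed, one gets that $\langle C\rangle$ contains a left-translate hitting everything — and then a short argument (e.g. pick any $(y,r_j)\in C$, note $s\cdot\langle C\rangle \cup\{s\}$ together with reachability forces $\langle C\rangle$ itself to equal $S$, since $S$ has no proper left ideal structure obstructing this: indeed $\langle C\rangle$ is a subsemigroup, it meets every layer because all layers are reached, and within a layer the reachable first-coordinates form all of $G$ as $G$ acts by left translations which are the arc-actions) yields $\langle C\rangle=S$. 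I expect (ii) $\Rightarrow$ (iii) to be the main obstacle, precisely because of the ``wandering second coordinate'' phenomenon — the naive approach of independently steering the two coordinates fails, and the remark in the statement that this lemma ``does not hold in general products of groups and semigroups'' signals that one must genuinely use the \emph{right zero} structure (any element can serve to reset the $R_k$-coordinate) rather than treating the factors independently. The cleanest write-up will isolate the sublemma: for any $(g,r_i)$ and any $(h,r_j)$ there is a directed walk between them, proved by the concatenate-then-correct trick above.
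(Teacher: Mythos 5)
Your proposal is correct and follows essentially the same route as the paper: (ii)$\Rightarrow$(iii) is handled by the same ``let the $R_k$-coordinate wander, then reset it with one element of $C$ landing in layer $r_j$'' idea (the paper post-corrects by applying some $(f,r_j)\in C$ order-of-$f$ many times, you pre-correct by aiming for $h\pi_G(y)^{-1}$; both work). The only step to streamline is (iii)$\Rightarrow$(i): rather than your roundabout left-translate argument, simply start the directed path at a vertex $c\in C$, so that the path itself spells $t=c\,c_1\cdots c_n\in\langle C\rangle$ --- which is exactly what the paper does.
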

\begin{proof}
Clearly, (i) implies (ii). Now, we show (ii)$\Rightarrow$(iii): Take
$s=(g,r_i),t=(h,r_j)\in S$. Multiply $s$ from the right by a
sequence of elements of $C$ in order to obtain $(h,r_{\ell})$ for some
$\ell\in[n]$. Now, take some $(f,r_j)\in C$ and multiply it order of
$f$ many times to $(h,r_{\ell})$ from the right. We obtain a directed
path from $s$ to $t$. This yields that $\Cay(S,C)$ is strongly
connected.

To see, (iii)$\Rightarrow$(i), let $s\in S$. Any directed path in $\Cay(S,C)$ from a vertex $c\in C$ to $s$ corresponds
to a word of elements of $C$ generating $s$. By strong connectivity, such a path exists.
\end{proof}

Lemma~\ref{lem:semibabai} and Lemma~\ref{lem:strong} together yield
that in our characterization we only need to consider planar groups
as factors. Unfortunately, Lemma~\ref{lem:semibabai} does not
preserve the generating system of the right group we start out from.
Specializing to right groups we can obtain a stronger statement
under certain circumstances. For this we introduce the notation $\pi_G(C)_j:=\{g\in
G\mid (g,r_j)\in C\}$.

\begin{lem}\label{lem:factors}
 Let $S=G\times R_k$ and $C$ a generating set such that there exists $(g,r_j)\in C$ with $g^{-1}hg=h^{\pm 1}$ for all
 $h\in \pi_G(C)_j$ or for all $h\in \pi_G(C)\backslash\pi_G(C)_j$. Then $\CAY(G,\pi_G(C))$ is a minor of $\CAY(S,C)$.
\end{lem}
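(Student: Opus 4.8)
\textbf{Proof plan for Lemma~\ref{lem:factors}.}
The goal is to exhibit $\CAY(G,\pi_G(C))$ as a minor of $\CAY(S,C)$, so the plan is to find, inside $\CAY(S,C)$, a set of edges whose contraction collapses each layer $G\times\{r_i\}$ onto a single copy of $\CAY(G,\pi_G(C))$, and then delete whatever remains. A natural candidate for the contracted edges comes from the distinguished generator $(g,r_j)$: for every $s=(h,r_i)\in S$ the arc $(s, s(g,r_j))=((h,r_i),(hg,r_j))$ lands in layer $r_j$. So first I would consider the subgraph of arcs labelled $(g,r_j)$; from any vertex, following this arc repeatedly stays in (or immediately moves to) layer $r_j$ and there traces out a coset-type structure governed by the cyclic group $\langle g\rangle$. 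The idea is to contract enough of these $(g,r_j)$-arcs so that each layer $G\times\{r_i\}$ with $i\neq j$ is merged into layer $G\times\{r_j\}$, and within layer $r_j$ the remaining $(g,r_j)$-arcs are contracted to identify $(h,r_j)$ with $(hg,r_j)$ whenever $g\notin\pi_G(C)$, i.e.\ to kill the superfluous generator; one has to be a little careful to contract a spanning forest's worth of edges and not create the wrong identifications, which is exactly where Lemma~\ref{lem:babai}/Lemma~\ref{lem:semibabai} philosophy is used implicitly.

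More concretely I would proceed as follows. Observe that $\CAY(G,\pi_G(C))$ has vertex set $G$ and an edge $\{h,hc\}$ for each $h\in G$, $c\in\pi_G(C)$; since $\pi_G(C)=\pi_G(C)_j\cup(\pi_G(C)\setminus\pi_G(C)_j)$, every such edge is witnessed by some $(c,r_i)\in C$, hence in $\CAY(S,C)$ there is an edge between $(h,r_{i'})$ and $(hc,r_i)$ for appropriate layers. Step one: use the $(g,r_j)$-arcs to define a map $\varphi\colon S\to G$, namely $\varphi(h,r_i)=h$, and show that contracting a suitable edge set $E$ realizes the quotient by $\varphi$ — i.e.\ the fibres $\varphi^{-1}(h)$ are each connected in the spanning subgraph $(S,E)$. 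The fibre $\varphi^{-1}(h)=\{(h,r_1),\dots,(h,r_k)\}$: from $(h,r_i)$ the arc $(e,r_j)$-type move is not available in general, but note $(h,r_i)(g,r_j)=(hg,r_j)$, and more usefully, since $R_k$ is generated by $\pi_{R_k}(C)$, one can move between layers; combined with the $G$-translations this should connect each fibre after we also contract inside layer $r_j$. Step two: after contraction, identify the resulting graph. Each edge of the contracted graph corresponds to a $C$-edge $\{(h,r_{i'}),(hc,r_i)\}$ that was not contracted, and under $\varphi$ this becomes $\{h,hc\}$; we must check we get every edge of $\CAY(G,\pi_G(C))$ and no extra edges or that extras can be deleted, giving $\CAY(G,\pi_G(C))$ as a deletion-minor of the contraction.

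Step three, which is where the hypothesis $g^{-1}hg=h^{\pm1}$ enters and which I expect to be the main obstacle: when we contract $(g,r_j)$-arcs we identify $(h,r_j)$ with $(hg,r_j)$, so an edge that was $\{(h,r_j),(hc,r_j)\}$ coming from a generator $(c,r_j)\in C$ with $c\in\pi_G(C)_j$ gets identified with edges obtained by translating by powers of $g$; for this to land consistently on the edge set of $\CAY(G,\pi_G(C))$ — equivalently for the contraction to be well-defined as a graph on vertex set $G/\langle g\rangle$ or on $G$ itself without the quotient destroying the other generators — one needs the conjugation action of $g$ on the relevant generators to only invert or fix them, so that $g^{-k}hg^k=h^{\pm1}$ and the coset $\langle g\rangle h\langle c\rangle$ structure is respected. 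This is precisely the dichotomy in the statement: either all $h\in\pi_G(C)_j$ behave well under conjugation by $g$ (so contracting inside layer $r_j$ is harmless), or all $h\in\pi_G(C)\setminus\pi_G(C)_j$ do (so one instead contracts the $(g,r_j)$-arcs only to merge the other layers into layer $r_j$ while keeping layer $r_j$ essentially intact and then contracts a spanning tree of $\langle g\rangle$-arcs there). So the plan is: handle the two cases of the hypothesis separately, in each case write down explicitly the contraction set $E$ (the non-contracted $C$-edges projecting isomorphically), verify fibre-connectivity to get the contraction map, verify the conjugation hypothesis makes the induced labelling on the quotient match the Cayley structure of $(G,\pi_G(C))$, and finally delete the leftover parallel edges. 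I would lean on the figures and on Lemma~\ref{lem:semibabai} as a sanity check that such a minor must exist abstractly, using the explicit hypothesis only to control which generating set survives.
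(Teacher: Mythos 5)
There is a genuine gap in the mechanism of your contraction. You want to contract the fibres of the projection $\varphi(h,r_i)=h$, but as you yourself note, the only arcs available for this are the $(g,r_j)$-arcs, and these join $(h,r_i)$ to $(hg,r_j)$, not to $(h,r_j)$; so the fibres of $\varphi$ are simply not connected by the edges you propose to contract, and connecting them by longer paths would collapse additional vertices. Your attempted repair --- additionally contracting $(g,r_j)$-arcs \emph{inside} layer $r_j$ so as to work on $G/\langle g\rangle$, or to ``kill the superfluous generator'' --- goes in the wrong direction: $g$ always lies in $\pi_G(C)$ (since $(g,r_j)\in C$), so the $g$-edges in layer $r_j$ must be \emph{kept}, and quotienting by $\langle g\rangle$ would destroy the vertex set $G$ you are trying to produce. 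Likewise, Lemma~\ref{lem:semibabai} cannot serve even as a sanity check here, since it only yields that \emph{some} Cayley graph of $G$ is a minor, whereas the whole point of the lemma is to obtain $\CAY(G,\pi_G(C))$ for the projected generating set.

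The missing idea is to accept the shift rather than fight it: first delete every arc that changes layer and does not come from the group element $g$, then contract exactly the arcs $((g',r_i),(g'g,r_j))$ with $i\neq j$. Each vertex outside layer $r_j$ has exactly one such arc, all of them end in layer $r_j$, and no two vertices of layer $r_j$ get identified, so this is a clean star-contraction realizing the map $(g',r_i)\mapsto (g'g,r_j)$ for $i\neq j$ and the identity on layer $r_j$. Under this map a within-layer arc of layer $r_i$ given by $(h,r_i)\in C$ becomes right multiplication by $g^{-1}hg$, so the contracted graph is $\Cay\bigl(G,\ \pi_G(C)_j\cup g^{-1}(\pi_G(C)\setminus\pi_G(C)_j)g\bigr)$. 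Only now does the hypothesis enter, exactly along the dichotomy you anticipated: if $g^{-1}hg=h^{\pm1}$ for all $h\notin\pi_G(C)_j$ this connection set equals $\pi_G(C)^{\pm1}$ directly, while if it holds for all $h\in\pi_G(C)_j$ the connection set equals $(g^{-1}\pi_G(C)g)^{\pm1}$ and one concludes via the isomorphism given by conjugation with $g$. Your step three correctly locates \emph{why} the conjugation condition is needed, but without the explicit shifted identification the construction preceding it does not go through.
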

\begin{proof}
 Let $(g,r_j)\in C$ be the element claimed in the statement of the lemma. Delete all arcs of the form
 $(g',r_i),(g'f,r_{\ell})$ with $f\neq g$ and $i\neq\ell$. Contract all arcs of the form $((g',r_i),(g'g,r_j))$ for
 $i\neq j$. Since every vertex $(g',r_i)$ with $i\neq j$ has an arc of that form, we can view the new set of vertices as $G\times\{r_j\}$. In the contracted graph there is an arc $((g',r_j),(f,r_j))$ if it was there before, i.e., there is $(h,r_j)\in C$ with $g'h=f$, or if it arose from the contraction. In this case for some $i\neq j$ there are $((g'g^{-1},r_i),(fg^{-1},r_i))$ and $(h,r_i)\in C$ with $g'g^{-1}h=fg^{-1}$. This is, the new graph is isomorphic to $\Cay(G, \pi_G(C)_j\cup g^{-1}(\pi_G(C)\backslash\pi_G(C)_j)g)$. 
 
 Now we use the assumption on $(g,r_j)$. If we have $g^{-1}hg=h^{\pm 1}$ for all
 $h\in \pi_G(C)_j$, then $\Cay(G, \pi_G(C)_j\cup g^{-1}(\pi_G(C)\backslash\pi_G(C)_j)g)=\Cay(G,(g^{-1}\pi_G(C)g)^{\pm 1})$. But since 
 $g'h=f\Leftrightarrow g^{-1}g'gg^{-1}hg=g^{-1}fg$ we have that the contracted graph $\Cay(G,(g^{-1}\pi_G(C)g)^{\pm 1})$ is isomorphic to $\Cay(G,\pi_G(C)^{\pm 1})$. This means that $\CAY(G,\pi_G(C))$ is a minor of $\CAY(S,C)$.
 
 If we have $g^{-1}hg=h^{\pm 1}$ for all $h\in \pi_G(C)\backslash\pi_G(C)_j$, then $\Cay(G, \pi_G(C)_j\cup g^{-1}(\pi_G(C)\backslash\pi_G(C)_j)g)=\Cay(G,\pi_G(C)^{\pm 1})$. Hence, $\CAY(G,\pi_G(C))$ is a minor of $\CAY(S,C)$.
 \end{proof}

Many right groups satisfy the preconditions for
Lemma~\ref{lem:factors}. Note for instance that if the group
generated by some $\pi_G(C)_j$ is Abelian, then the preconditions of
Lemma~\ref{lem:factors} are trivially satisfied. A Coxeter system is
a group $G$ with generating system $C$, such that all generators are
of order $2$ and all relations are of the form
$(c_ic_j)^{m_{ij}}=e$. The \emph{Coxeter-Dynkin diagram} of a
Coxeter system is an (edge labeled) graph, on the vertex set $C$,
with an edge connecting $c_i$ and $c_j$ if and only if $m_{ij}\geq
3$. A consequence of the classification of finite Coxeter groups,
due to Coxeter~\cite{Cox-35} is that the Coxeter-Dynkin diagram of a
Coxeter system is a tree.

\begin{prop}
 Let $S=G\times R_k$ and $C$ a generating set. If $(G,\pi_G(C))$ forms a Coxeter system, then $\CAY(G,\pi_G(C))$ is a
 minor of $\CAY(S,C)$.
\end{prop}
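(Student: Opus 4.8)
The goal is to apply Lemma~\ref{lem:factors} with a cleverly chosen element $(g,r_j)\in C$. Lemma~\ref{lem:factors} asks for a generator $(g,r_j)$ such that conjugation by $g$ inverts (or fixes) every element of $\pi_G(C)_j$, or every element of the complementary set $\pi_G(C)\setminus\pi_G(C)_j$. Since $(G,\pi_G(C))$ is a Coxeter system, every element of $\pi_G(C)$ is an involution, so $h^{-1}=h$ for every $h\in\pi_G(C)$; hence the condition ``$g^{-1}hg=h^{\pm1}$'' is equivalent to ``$g$ and $h$ commute''. So I need to find $c\in\pi_G(C)$ together with some index $j$ with $c\in\pi_G(C)_j$ such that $c$ commutes with every other generator in $\pi_G(C)_j$, or commutes with every generator in $\pi_G(C)\setminus\pi_G(C)_j$.

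The key tool is the quoted consequence of Coxeter's classification: the Coxeter--Dynkin diagram $\Delta$ of $(G,\pi_G(C))$ is a tree (in particular a forest), on vertex set $\pi_G(C)$, where non-adjacency of $c_i,c_j$ means exactly $m_{ij}=2$, i.e.\ $c_ic_j=c_jc_i$. So the statement ``$c$ commutes with every generator in a set $T$'' translates to ``$c$ has no neighbour in $\Delta$ among the vertices of $T$.'' Now pick any leaf $c$ of the tree $\Delta$ (if $|\pi_G(C)|=1$ the statement is trivial since then $G\cong\Z_2$ and $\pi_G(C)$ is a single generator commuting with everything in either block; so assume $|\pi_G(C)|\ge2$, where a tree has a leaf). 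Let $c'$ be the unique neighbour of $c$ in $\Delta$. Choose $j$ with $(c,r_j)\in C$. There are two cases: either $c'\in\pi_G(C)_j$ or $c'\in\pi_G(C)\setminus\pi_G(C)_j$. In the first case, $c$ commutes with every element of $\pi_G(C)\setminus\pi_G(C)_j$ (because its only non-commuting partner $c'$ lies in $\pi_G(C)_j$), so the second alternative of Lemma~\ref{lem:factors} applies. In the second case, $c$ commutes with every element of $\pi_G(C)_j$ (its only non-commuting partner is outside that set), so the first alternative of Lemma~\ref{lem:factors} applies. Either way the hypothesis of Lemma~\ref{lem:factors} is met with $(g,r_j)=(c,r_j)$, and we conclude that $\CAY(G,\pi_G(C))$ is a minor of $\CAY(S,C)$.

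I should be slightly careful about the degenerate situation where $c$ is an \emph{isolated} vertex of $\Delta$ (possible if $\Delta$ is merely a forest rather than a connected tree, or if $|\pi_G(C)|=1$): then $c$ commutes with \emph{all} other generators, so both alternatives of Lemma~\ref{lem:factors} hold trivially and we are done immediately. The only genuine content is the leaf-picking argument above, and the main (very mild) obstacle is just making sure the element $(g,r_j)$ required by Lemma~\ref{lem:factors} genuinely lies in $C$ with the right second coordinate $r_j$ — which it does by the choice of $j$ — and that when $\pi_G(C)_j$ or its complement is empty, the corresponding universally-quantified condition is vacuously true, so Lemma~\ref{lem:factors} still applies. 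No serious calculation is involved; the whole proposition is a short combinatorial reduction to Lemma~\ref{lem:factors} via the tree structure of the Coxeter--Dynkin diagram.
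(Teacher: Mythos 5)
Your proof is correct and follows the same core strategy as the paper: pick a leaf of the Coxeter--Dynkin tree and reduce to Lemma~\ref{lem:factors}, using that involutions make the condition $g^{-1}hg=h^{\pm1}$ equivalent to commutation. Your case split --- on whether the leaf's unique neighbour lies in $\pi_G(C)_j$ or in its complement --- is in fact slightly cleaner than the paper's, which in one case first passes to the smaller generating set $C\setminus\{(h,r_j)\}$ before invoking the lemma, a detour your argument renders unnecessary.
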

\begin{proof}
 Let $g\in\pi_G(C)_j$ be a leaf of the corresponding Coxeter-Dynkin diagram, and $h$ its neighbor.
 If the only $i\in[k]$ with $h\in\pi_G(C)_i$ is $i=j$, then $(gf)^2=e$ for all $f\in\pi_G(C)\setminus\pi_G(C)_j$
 and Lemma~\ref{lem:factors} gives the result. Otherwise we can consider $C':=C\setminus\{(h,r_j)\}$,
 which satisfies Lemma~\ref{lem:strong}(ii) and therefore is still generating. Moreover, $\CAY(G,\pi_G(C'))$ is
 a deletion minor of $\CAY(G,\pi_G(C))$. We have that $(gf)^2=e$ for all $f\in\pi_G(C')_j$ and Lemma~\ref{lem:factors}
 gives the result.
\end{proof}

We believe, that a stronger form of Lemma~\ref{lem:factors} should hold.
\begin{conj*}
 Let $S=G\times R_k$ and $C$ a generating set. Then $\CAY(G,\pi_G(C))$ is a minor of $\CAY(S,C)$.
\end{conj*}

Note that for products $G\times S$ of group $G$ and semigroup $S$ not being a right zero band the natural generalization
of the conjecture is false. Indeed, even if $S=\mathbb{Z}_2$ it is false: Consider the planar representation of
$A_5\times \mathbb{Z}_2$
with three generators of order two, see Figure~\ref{fig:A5}. Since there is no planar representation of $A_5$ with three
generators of order two, no Cayley graph of $A_5$ arising by projecting the generating system of $A_5\times \mathbb{Z}_2$
to $A_5$ is a minor of the planar Cayley graph of $A_5\times \mathbb{Z}_2$.

We will prove a weakening of this conjecture for the planar case in Theorem~\ref{thm:project}. Essentially we will prove,
that in order to not satisfy the preconditions of Lemma~\ref{lem:factors} the Cayley graph must have too many edges
to be planar. Therefore, we need the classic result of Euler:

\begin{lem}[Euler's Formula]\label{lem:Euler}
Every simple connected plane graph $G$ with vertex set $V$, edge set $E$ and face set $F$ fulfills $|V|-|E|+|F|=2$.
In particular, $G$ has at most $3|V|-6$ edges and at most $2|V|-4$ edges if the embedding has no triangular faces.
\end{lem}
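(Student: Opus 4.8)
The plan is to establish the identity $|V|-|E|+|F|=2$ by induction on $|E|$ and then derive the two edge bounds by a standard double counting of edge--face incidences.

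For the identity, recall that a connected simple graph satisfies $|E|\ge |V|-1$, with equality exactly when $G$ is a tree. If $G$ is a tree, then any plane drawing has a single face, so $|V|-|E|+|F|=|V|-(|V|-1)+1=2$; this is the base case. If $G$ is not a tree, pick an edge $e$ lying on some cycle. In the plane drawing the two sides of $e$ belong to two \emph{distinct} faces (this is exactly where we use that $e$ lies on a cycle rather than being a bridge), and deleting $e$ merges these two faces into one while keeping the drawing connected and plane. Passing from $G$ to $G\setminus\{e\}$ therefore decreases $|E|$ by $1$ and $|F|$ by $1$ and leaves $|V|$ unchanged, so the alternating sum is unchanged; by the induction hypothesis it equals $2$ for $G\setminus\{e\}$, hence also for $G$.

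For the edge bounds assume $|V|\ge 3$ (for $|V|\le 2$ the claimed inequalities are vacuous). Count incidences between edges and faces in a fixed plane embedding: every edge is incident to at most two faces, so there are at most $2|E|$ incidences. On the other hand, since $G$ is simple and has at least three vertices, the boundary walk of every face uses at least three edges, and if the embedding has no triangular face it uses at least four. Hence $3|F|\le 2|E|$ in general, and $4|F|\le 2|E|$ in the triangle-free case. Substituting $|F|=2-|V|+|E|$ from the identity gives $3(2-|V|+|E|)\le 2|E|$, i.e.\ $|E|\le 3|V|-6$; in the triangle-free case $4(2-|V|+|E|)\le 2|E|$ yields $|E|\le 2|V|-4$.

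The only points needing a little care are the base case of the induction (recognizing trees as the connected graphs with the fewest edges and noting they bound a single face) and the lower bounds on the length of a face boundary, which would fail in the presence of loops, parallel edges, or fewer than three vertices; both are covered by the hypotheses that $G$ is simple and, for the edge bounds, that $|V|\ge 3$. I do not expect any genuine obstacle here; everything else is bookkeeping.
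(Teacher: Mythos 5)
Your proof is correct and is the standard textbook argument: induction on the number of edges for the identity $|V|-|E|+|F|=2$ (tree base case, deletion of a cycle edge merging two faces), followed by double counting edge--face incidences for the two edge bounds. The paper states this lemma as a classical result without proof, so there is no argument to compare against; your write-up fills that in adequately. One pedantic remark: for $|V|\le 2$ the bound $|E|\le 3|V|-6$ is not vacuous but actually false (e.g.\ $K_2$ has $1>0$ edges), so the hypothesis $|V|\ge 3$ you introduce is genuinely needed rather than a formality --- the paper's own statement silently omits it, and all its applications have $|V|\ge 3$, so nothing downstream is affected.
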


As a second ingredient we need a formula for the number of edges of the
underlying undirected Cayley graph of a right group. For $C\subseteq
G\times R_k$ and $a\in G$ we set $c_a:=|\{j\in [k]\mid (a,r_j)\in
C\}|$. Furthermore set $m:=|G|$.

\begin{lem}\label{lem:count}
Let $S=G\times R_k$ with generating system $C$. Then $\CAY(S,C)$ has $km$ vertices and its number of edges is:
\begin{eqnarray}\label{form:count}
 m((\sum_{a\in\pi_G(C)}c_ak-\frac{c_{a^{-1}}}{2})-\frac{c_e}{2})\geq \frac{m}{2}((2k-1)\sum_{a\in\pi_G(C)}c_a-c_e).
\end{eqnarray}

\end{lem}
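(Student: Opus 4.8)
The plan is to count the edges of $\CAY(S,C)$ by first counting the arcs of the directed graph $\Cay(S,C)$ and then accounting for the identifications that occur when passing to the underlying simple graph. The vertex count is immediate: $|V(\Cay(S,C))|=|S|=|G\times R_k|=km$. For the arcs, recall that for each $s=(h,r_i)\in S$ and each $(a,r_j)\in C$ there is an arc from $s$ to $s(a,r_j)=(ha,r_j)$. Grouping the generators by their $G$-coordinate, the number of arcs leaving a fixed vertex is $\sum_{a\in\pi_G(C)}c_a$, so the total number of arcs is $km\sum_{a\in\pi_G(C)}c_a$. The subtle point, and what I expect to be the main obstacle, is to correctly bookkeep how many pairs of arcs get merged into a single undirected edge, and how many arcs become loops; this is where the $c_{a^{-1}}$ and $c_e$ correction terms come from.

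First I would isolate the loops: an arc from $(h,r_i)$ to $(ha,r_j)$ is a loop exactly when $a=e$ and $i=j$, i.e.\ it comes from a generator $(e,r_i)\in C$ used at a vertex in $G\times\{r_i\}$. There are $c_e$ such generators and $m$ vertices in each relevant sheet, giving $mc_e$ loops, all of which vanish in $\CAY(S,C)$. Next I would analyze when two distinct non-loop arcs span the same pair of vertices. Two arcs $\{(h,r_i)\to(ha,r_j)\}$ and $\{(h',r_{i'})\to(h'a',r_{j'})\}$ cover the same vertex pair (in opposite directions) precisely when $(h',r_{i'})=(ha,r_j)$ and $(h'a',r_{j'})=(h,r_i)$, which forces $j'=j$, $i'=j$, $h'=ha$, and then $a'=a^{-1}$ with $i=j$; so antiparallel pairs occur only among arcs within a single sheet $G\times\{r_j\}$, coming from a generator $(a,r_j)$ and the generator $(a^{-1},r_j)$. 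One must treat the case $a=a^{-1}$ (involutions, with $a\ne e$) carefully — there each undirected edge within the sheet is covered by exactly two antiparallel arcs from the single generator $(a,r_j)$ — but in all cases the net effect is that within each of the $c_a$ sheets a generator $a$ contributes, together with $a^{-1}$, a set of $m$ arcs in each direction that collapse to $m$ edges, i.e.\ we double-count by $mc_{a^{-1}}/2$ for each $a$; summing over $a\in\pi_G(C)$ and subtracting, and also subtracting the $mc_e$ loops (noting $c_e$ appears once in the $\sum c_{a^{-1}}/2$ sum and once more as the explicit $-mc_e/2$ term to total $-mc_e$), yields exactly
\begin{equation*}
 |E(\CAY(S,C))| \;=\; km\sum_{a\in\pi_G(C)}c_a \;-\; \frac{m}{2}\sum_{a\in\pi_G(C)}c_{a^{-1}} \;-\; \frac{m}{2}\,c_e \;=\; m\Bigl(\sum_{a\in\pi_G(C)}\bigl(c_a k-\tfrac{c_{a^{-1}}}{2}\bigr)-\tfrac{c_e}{2}\Bigr),
\end{equation*}
which is the claimed equality.

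Finally, for the inequality I would simply use $\sum_{a\in\pi_G(C)}c_{a^{-1}}=\sum_{a\in\pi_G(C)}c_a$, since $a\mapsto a^{-1}$ permutes $\pi_G(C)$ (it is the generating set of a group, hence closed under inverses up to the relabeling, and in any case $c_{a^{-1}}$ ranges over the same multiset of values as $c_a$). Substituting gives
\begin{equation*}
 m\Bigl(k\sum_{a\in\pi_G(C)}c_a-\tfrac{1}{2}\sum_{a\in\pi_G(C)}c_a-\tfrac{c_e}{2}\Bigr)=\frac{m}{2}\Bigl((2k-1)\sum_{a\in\pi_G(C)}c_a-c_e\Bigr),
\end{equation*}
and since in fact this is an equality once the summation reindexing is justified, the stated "$\geq$" follows a fortiori. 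I expect the bookkeeping of antiparallel arcs — especially the separate handling of involutory generators versus generator/inverse pairs $a\ne a^{-1}$, and making sure the $c_e$ term is not double-subtracted — to be the only genuinely delicate part; everything else is routine counting.
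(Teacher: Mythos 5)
Your vertex count, the total arc count $mk\sum_{a\in\pi_G(C)}c_a$, and the loop count $mc_e$ are all correct, and your overall strategy (count arcs, then subtract antiparallel duplications and loops) is exactly the paper's. The first genuine gap is in your characterization of antiparallel pairs. Solving $(h',r_{i'})=(ha,r_j)$ and $(h'a',r_{j'})=(h,r_i)$ gives $i'=j$, $j'=i$ and $a'=a^{-1}$; it does \emph{not} force $i=j$. So the reverse of the arc $(h,r_i)\to(ha,r_j)$ produced by the generator $(a,r_j)$ is present exactly when $(a^{-1},r_i)\in C$: the condition is on the \emph{source} sheet $r_i$, and the two arcs of the pair join sheet $r_i$ to sheet $r_j$, which are typically distinct. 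Your claim that antiparallel pairs occur only within a single sheet is false; for instance in $\Z_2\times R_2$ with $C=\{(1,r_1),(1,r_2)\}$ the underlying graph is a $4$-cycle all four of whose edges arise from antiparallel pairs joining the two sheets. Consequently your ``net effect'' sentence does not follow from your analysis. A careful count gives $mc_ac_{a^{-1}}$ doubly-covered $a$-labelled arcs (each of the $c_a$ generators with $G$-coordinate $a$ has $mc_{a^{-1}}$ of its $mk$ arcs reversed), i.e.\ a correction of $\frac{m}{2}\sum_a c_ac_{a^{-1}}$ rather than $\frac{m}{2}\sum_a c_{a^{-1}}$; these agree only when every $c_a\le 1$, which is the regime in which the lemma is later applied and is also implicit in the paper's own one-line count. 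In the example above the stated formula evaluates to $6$ while the graph has $4$ edges, so the bookkeeping you were rightly worried about cannot be waved away.

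The second gap is your justification of the inequality. A generating set of a group need not be closed under inverses (take $\pi_G(C)=\{1\}$ in $\Z_3$: then $c_{1^{-1}}=c_2=0\neq c_1$), so $\sum_{a\in\pi_G(C)}c_{a^{-1}}$ is in general \emph{not} equal to $\sum_{a\in\pi_G(C)}c_a$, and your assertion that the ``$\geq$'' is really an equality is wrong. What is true, and what the paper uses, is $\sum_{a\in\pi_G(C)}c_{a^{-1}}\le\sum_{a\in\pi_G(C)}c_a$, because $a\mapsto a^{-1}$ injects $\{a\in\pi_G(C)\mid a^{-1}\in\pi_G(C)\}$ into $\pi_G(C)$; equality holds precisely when $c_{a^{-1}}=c_a$ for all $a$ (e.g.\ when all generators are involutions), which is how the paper phrases the minimization. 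This corrected monotonicity still yields the claimed lower bound, but the inequality has to be argued in this direction rather than dismissed as a reindexing.
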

\begin{proof}
The number of vertices is trivial.

 We start by proving the equality for the number of edges. Every element $(a,r_i)\in C$ contributes an outgoing arc at every element of $S$. But if $(a^{-1},r_j)\in C$ all arcs
 of the form $((g,r_j),(ga,r_i))$ are counted twice and there are $m$ of them. Note that this occurs in particular if
 $a^2=e$ and also if $i=j$. So, this yields $mkc_a-\frac{c_{a^{-1}}}{2}m$ edges labeled $a$. In the particular case
 that $a=e$ additionally at each vertex $(g,r_i)$ a loop can be deleted, i.e., instead of counting half an edge at each
 such vertex we count none. This yields the $-m\frac{c_e}{2}$ in the formula.

 Together we obtain the claimed equality.
 
 Observe now that for fixed $c_e$ the left-hand-side of the formula is minimized if $a^2=e$ for every $a\in\pi_G(C)$, i.e., $c_a=c_{a^{-1}}$. This yields the lower bound. 
\end{proof}

\begin{thm}\label{thm:project}
Let $S=G\times R_k$ and $C$ a generating system such that
$\CAY(S,C)$ is planar. Then $\CAY(G,\pi_G(C))$ is a minor of $\CAY(S,C)$, i.e., in particular planar.
\end{thm}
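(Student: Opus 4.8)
The plan is to deduce the theorem from Lemma~\ref{lem:factors} by showing that the planarity hypothesis forces its conjugation hypothesis to be satisfied by some $(g,r_j)\in C$. The case $k=1$ is immediate, since then $S\cong G$ and $\CAY(S,C)\cong\CAY(G,\pi_G(C))$; so assume $k\ge 2$. I would first record two cheap ways in which the hypothesis of Lemma~\ref{lem:factors} holds. If some $(e,r_j)\in C$, then $e^{-1}he=h$ for every $h\in\pi_G(C)_j$, so its first branch is met with $(g,r_j)=(e,r_j)$; and if $\pi_G(C)_j=\{g\}$ for some $(g,r_j)\in C$, then the only $h$ to test is $g$ itself, with $g^{-1}gg=g$, so again the first branch is met. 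In either situation Lemma~\ref{lem:factors} applies and exhibits $\CAY(G,\pi_G(C))$ as a minor of $\CAY(S,C)$, which, since minors do not increase the genus, is in particular planar.

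It remains to treat the case in which neither of these two triggers is present; I claim this is incompatible with planarity. In that case $e\notin\pi_G(C)$ (so $c_e=0$) and no block $\pi_G(C)_j$ is a singleton. Since $C$ generates $S$, Lemma~\ref{lem:strong} gives $\pi_{R_k}(C)=R_k$, so every $r_j$ occurs in $C$, every block is nonempty, and hence $|C|=\sum_{j\in[k]}|\pi_G(C)_j|\ge 2k$. Moreover $\pi_G(C)\ne\emptyset$ together with $e\notin\pi_G(C)$ forces $|G|\ge 2$, so $\CAY(S,C)$ has $km\ge 4\ge 3$ vertices and Lemma~\ref{lem:Euler} applies. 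Chaining Lemma~\ref{lem:count} with Euler's bound then yields
\begin{align*}
3km-6 \ge |E(\CAY(S,C))| &\ge \tfrac{m}{2}\bigl((2k-1)|C|-c_e\bigr) = \tfrac{m}{2}(2k-1)|C| \\
&\ge mk(2k-1) \ge 3mk ,
\end{align*}
using $k\ge 2$ in the last step. This contradiction shows that, under planarity, one of the two triggers must be present, so the previous paragraph applies and the theorem follows.

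The step I expect to need the most care is this closing inequality chain, and in particular the boundary value $k=2$: there $mk(2k-1)=6m$ while Euler only excludes $|E|>3km-6=6m-6$, so the estimate is tight and the strictness has to be tracked carefully. What makes it go through is that in the remaining case one has a combinatorial \emph{lower} bound $|C|\ge 2k$ on the connection set — coming from the absence of a singleton block together with $\pi_{R_k}(C)=R_k$ — rather than merely an upper bound read off from planarity; feeding this lower bound into Lemma~\ref{lem:count} is exactly what overshoots the $3km-6$ threshold. A small but necessary point is to verify that $c_e=0$ and $|G|\ge 2$ hold in this case, so that both the simplified form of the edge count and the $3|V|-6$ form of Euler's inequality are legitimately available.
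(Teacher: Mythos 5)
Your proposal is correct and follows essentially the same route as the paper: reduce to Lemma~\ref{lem:factors} unless $e\notin\pi_G(C)$ and every block $\pi_G(C)_j$ has size at least $2$, and in that residual case combine the edge count of Lemma~\ref{lem:count} with Euler's bound to contradict planarity. Your two explicit ``triggers'' (an $(e,r_j)\in C$ or a singleton block) and the use of Lemma~\ref{lem:strong} to see that every block is nonempty are exactly the steps the paper leaves implicit in the phrase ``if we cannot apply Lemma~\ref{lem:factors}.''
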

\begin{proof}
 The statement is trivial for $k=1$ so assume $k\geq 2$.
 If we cannot apply Lemma~\ref{lem:factors}
 we know in particular that $|\pi_G(C)_j|>1$ for all $j\in [k]$ and in particular $\sum_{a\in\pi_G(C)}c_a\geq 2k$.
 Moreover, we have $e\notin \pi_G(C)$. Thus $c_e=0$.

 We can now use Lemma~\ref{lem:count} to estimate the number of edges of $\CAY(S,C)$. Indeed, since $c_e=0$ we get the following lower bound from~\eqref{form:count}:
 $$\frac{m}{2}(2k-1)\sum_{a\in\pi_G(C)}c_a\geq (2k-1)mk>3mk-6, \text{ for } k\geq 2,$$
 whereas the smallest value in this chain is the upper bound for the number of edges of a planar graph given
 by Lemma \ref{lem:Euler} -- a contradiction.
\end{proof}

\section{Non-planar right groups from planar groups}\label{sec:nonplanar}

In this section we show that the right groups $\Z_2\times H\times
R_k$ with $H$ any of $\Z_{2(n+1)},D_{2n},A_4,S_4,A_5$ where $n\geq 1$ and $k\geq 2$ are not planar. Moreover, $\{e\}\times R_k$ is not planar for $k\geq 5$. Since
$\mathbb{Z}_2\times\mathbb{Z}_2\cong D_4$,  $\mathbb{Z}_2\times\mathbb{Z}_{2n+1}\cong \mathbb{Z}_{4n+2}$ and
$\mathbb{Z}_2\times D_{2n+1}\cong D_{4n+2}$ for all $n\geq 1$ this is exactly the set of right groups we have to prove to
be non-planar in order to show that the list from Theorem~\ref{thm:positiv} is complete.

Euler's Formula already allows us to restrict the size of the right zero band in the product of a planar right group:

\begin{prop}\label{prop:n<4}
 If $G$ is nontrivial and $G\times R_k$ is planar, then $k\leq 3$. Moreover, $G\times R_k$ is non-planar for any $G$ and $k\geq 5$.
\end{prop}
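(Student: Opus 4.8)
The plan is to prove both assertions by counting edges via Lemma~\ref{lem:count} and comparing against Euler's bound from Lemma~\ref{lem:Euler}. The key observation is that once $G$ is nontrivial, any generating set $C$ of $S = G\times R_k$ must have $\sum_{a\in\pi_G(C)}c_a$ fairly large relative to $k$, because $\pi_{R_k}(C)$ must generate $R_k$ (Lemma~\ref{lem:strong}), which already forces $\sum_{a\in\pi_G(C)}c_a\geq k$; and since $\pi_G(C)$ must generate the nontrivial group $G$, we get at least one more contribution, hence $\sum_{a\in\pi_G(C)}c_a\geq k+1$. Also at most one generator can be the identity (indeed if $c_e\geq 1$ we can still say $c_e\leq k$, and in fact one copy of $e$ suffices so without loss of generality $c_e\leq 1$ — but I should be careful here: the lemma allows arbitrary $C$, so I will just carry $c_e$ as a variable bounded by the structural constraints).

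Concretely, first I would invoke Lemma~\ref{lem:count}: $\CAY(S,C)$ has $km$ vertices and at least $\frac{m}{2}\bigl((2k-1)\sum_{a\in\pi_G(C)}c_a - c_e\bigr)$ edges. For planarity Lemma~\ref{lem:Euler} demands at most $3km-6$ edges. So a contradiction follows whenever $\frac{m}{2}\bigl((2k-1)\sum_{a\in\pi_G(C)}c_a - c_e\bigr) > 3km-6$. For the first assertion ($G$ nontrivial, so $m\geq 2$): using $\sum c_a \geq k+1$ and bounding $c_e\leq \sum c_a$ (or better, noting that if $e\in\pi_G(C)$ then $\pi_G(C)\setminus\{e\}$ still must generate $G$ so $\sum_{a\neq e}c_a\geq 1$, whence $c_e\leq \sum c_a - 1 \le$ the total minus one), I would plug in $k=4$ and check $\frac{m}{2}\bigl(7(k+1) - c_e\bigr)$; since $c_e\leq \sum c_a$ and each repeated generator is a genuine edge contribution, the cleanest route is to observe that for $k\geq 4$ the bound already exceeds $3km-6$. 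I expect the arithmetic to go through comfortably for $k\geq 4$ with $m\geq 2$, establishing $k\leq 3$.

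For the second assertion I must also cover the trivial group $G=\{e\}$ with $k\geq 5$; here $\CAY(\{e\}\times R_k, C)$ is just $K_k$ (as noted after Theorem~\ref{thm:positiv}, $\CAY(\{e\}\times R_{k'},R_{k'})\cong K_{k'}$, and since $C$ generating forces $\pi_{R_k}(C)=R_k$, one always gets all of $K_k$), and $K_5$ is non-planar, so $k\geq 5$ is immediate. For nontrivial $G$ and $k\geq 5$ this is subsumed by (a strengthening of) the first part's computation, or can be redone directly: with $\sum c_a\geq k+1\geq 6$ and $m\geq 2$, the lower bound $\frac{m}{2}(2k-1)(k+1)$ minus the $c_e$ correction dwarfs $3km-6$. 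I would unify the two assertions by stating the edge count inequality once and then reading off both ranges of $k$.

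The main obstacle I anticipate is handling the identity generator and the possibility of repeated generators cleanly, i.e.\ making sure the bound $\sum_{a\in\pi_G(C)}c_a\geq k+1$ and the control on $c_e$ are airtight without case analysis — in particular ruling out degenerate situations where many copies of $e$ inflate $c_e$ and weaken the lower bound. The resolution is that $c_e$ only subtracts, and it is bounded by $\sum_{a}c_a$ while the dominant term carries a factor $2k-1$; moreover one may always assume $C$ is a \emph{minimal} generating system (removing redundant elements preserves generation by Lemma~\ref{lem:strong} and only deletes edges, hence planarity is preserved), which forces $c_e\leq 1$ and each $c_a$ small. So I would begin the proof by passing to a minimal generating set, which trivializes these concerns, and then the Euler-versus-count inequality finishes it. The only genuinely separate case is $m=1$, dispatched by the $K_k$ remark.
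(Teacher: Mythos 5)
Your overall strategy is exactly the paper's: for nontrivial $G$ and $k\geq 4$, compare the lower bound of Lemma~\ref{lem:count} against Euler's bound $3km-6$ from Lemma~\ref{lem:Euler}, and handle $G=\{e\}$, $k\geq 5$ separately via $\CAY(\{e\}\times R_k,R_k)\cong K_k\supseteq K_5$. However, two of your auxiliary claims are false and should be dropped. First, $\sum_{a\in\pi_G(C)}c_a\geq k+1$ does not follow: the element of $C$ witnessing a non-identity group coordinate also counts toward covering $R_k$, so e.g.\ $C=\{(a,r_1),(e,r_2),(e,r_3),(e,r_4)\}$ with $G=\langle a\rangle$ nontrivial has $\sum_a c_a=k=4$. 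All you actually get is $\sum_a c_a\geq k$ together with $c_e\leq \sum_a c_a-1$ (since at least one non-identity generator must occur). Second, passing to a minimal generating set does \emph{not} force $c_e\leq 1$: the same example is a minimal generating set with $c_e=3$, because removing any $(e,r_j)$ destroys the condition $\pi_{R_k}(C)=R_k$ required by Lemma~\ref{lem:strong}.

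Fortunately neither error is fatal. With the correct bounds $\sum_a c_a\geq k$ and $c_e\leq\sum_a c_a-1$, the lower bound in~\eqref{form:count} is at least $\tfrac{m}{2}\bigl((2k-1)k-(k-1)\bigr)=\tfrac{m}{2}\bigl(2k^2-2k+1\bigr)$, which for $k=4$ equals $12.5m>12m-6=3km-6$, and which exceeds $3km$ for all $k\geq 4$; this extremal configuration (one non-identity generator, $c_e=k-1$) is precisely the one the paper minimizes over. So your proof closes once you replace the two incorrect claims by these weaker, correct ones; the rest, including the $K_k$ argument for the trivial group, matches the paper.
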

\begin{proof}
 Let $k\geq 4$ and $G\times R_k$ with $G$ non-trivial, i.e., there is $a\in G$ such that $c_a>0$. The lower bound in~\eqref{form:count} is minimized if $k=4$ and there is exactly one such $a\in G$ and $c_a=1$. In this case~\eqref{form:count} gives $m((3k-\frac{3}{2})+(k-\frac{1}{2})-\frac{3}{2})=12.5m>12m-6$ --
 a contradiction to Euler's Formula (Lemma~\ref{lem:Euler}).

 Since $\CAY(\{e\}\times R_k, R_k)\cong K_k$ and $K_k$ is non-planar for all $k\geq 5$ we obtain the second part of
 the statement.
\end{proof}

%

With some further edge counting we obtain:

\begin{prop}\label{prop:nonplanar1}
 The right groups $\Z_2\times H\times
R_k$ with $H$ any of $D_{2n},S_4,A_5$, where $n\geq 1$ and $k=2,3$, are
not planar.
\end{prop}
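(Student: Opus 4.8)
Write $G:=\Z_2\times H$ and $m:=|G|$, and suppose for contradiction that $C$ generates $S=G\times R_k$ and that $\CAY(S,C)$ is planar. The plan has three stages: (i) use Theorem~\ref{thm:project} together with Maschke's Theorem to force $\pi_G(C)$ — up to a possibly redundant identity element — to be the unique minimal planar generating system $C_0=\{a_1,a_2,a_3\}$ of $G$, which consists of three involutions; (ii) note that $\CAY(G,C_0)$ is bipartite, hence so is $\CAY(S,C)$, hence $\CAY(S,C)$ is triangle-free; (iii) count edges with Lemma~\ref{lem:count} and apply the triangle-free case $|E|\le 2|V|-4$ of Euler's Formula (Lemma~\ref{lem:Euler}). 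Compared with Proposition~\ref{prop:n<4}, the genuinely new ingredient is~(ii): these particular planar groups have \emph{bipartite} Cayley graphs, and this is exactly what makes the sharper Euler bound available.

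For step~(i) I would argue as follows. By Theorem~\ref{thm:project}, $\CAY(G,\pi_G(C))$ is planar, and since deleting $e$ from a connection set changes neither the underlying graph nor (by Lemma~\ref{lem:strong}) the generating property, $\CAY(G,\pi_G(C)\setminus\{e\})$ is also planar with $\pi_G(C)\setminus\{e\}$ generating $G$. None of $\Z_2\times D_{2n},\Z_2\times S_4,\Z_2\times A_5$ is cyclic, and each appears in Table~\ref{table} only with a three-element minimal generating system; hence, by Maschke's Theorem, any minimal generating subset of $\pi_G(C)\setminus\{e\}$ is a copy of that system $C_0$, a set of three involutions — so in particular $|C|\ge|\pi_G(C)|\ge 3$. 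Moreover $\CAY(G,C_0)$ is bipartite in every case: it is the prism over a $2n$-gon when $H=D_{2n}$, and for $H\in\{S_4,A_5\}$ it is the Archimedean solid listed in Table~\ref{table}; in each case all faces have even length. Then in the principal case $\pi_G(C)=C_0$ (so $e\notin\pi_G(C)$), the map $(g,r_j)\mapsto(\text{bipartition class of }g)$ is a proper $2$-colouring of $\CAY(S,C)$ — every arc $(g,r_i)\to(ga,r_j)$ uses some $a\in C_0$ and so switches colour — and $\CAY(S,C)$ is triangle-free. Now $c_e=0$ and $c_{a^{-1}}=c_a$ for all $a\in\pi_G(C)$, so Lemma~\ref{lem:count} gives $|E(\CAY(S,C))|=m(k-\tfrac12)|C|$ and $|V|=km$, and the triangle-free bound $|E|\le 2|V|-4$ becomes $m(2k-3)\le-8$, which is impossible for $k\ge 2$.

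It remains to dispose of the configurations of $C$ outside the principal case, which I would do with the plain bound $|E|\le 3|V|-6$. If $\pi_G(C)\supsetneq C_0$, or if $e\in\pi_G(C)$ (so $\pi_G(C)\supseteq C_0\cup\{e\}$), then $|C|\ge 4$; combining the lower bound of~\eqref{form:count} with the inequality $c_e\le|C|-3$ — the generators with non-trivial group part must cover at least the three elements of $C_0$ — shows $|E(\CAY(S,C))|>3km-6$ in every such case, the sole exception being $k=2$, $\pi_G(C)=C_0\cup\{e\}$, and all multiplicities equal to $1$. In that exceptional case, if some index $j$ carries both $(e,r_j)$ and some $(a_i,r_j)$ then $C\setminus\{(e,r_j)\}$ still generates $S$ and we are back in the principal case; otherwise $C=\{(a_1,r_1),(a_2,r_1),(a_3,r_1),(e,r_2)\}$ up to relabelling, and $\CAY(S,C)$ consists of one copy of $\CAY(G,C_0)$, one edgeless copy of $G$, and the four perfect matchings between the two copies induced by $e,a_1,a_2,a_3$. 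Since $G=\langle C_0\rangle$ is non-abelian, some $a\in C_0$ does not commute with some $b\in C_0\setminus\{a\}$; contracting the matching induced by $a$ turns $\CAY(S,C)$ into a Cayley graph of $G$ on $m$ vertices whose connection set is $C_0\cup\{b'a:b'\in C_0\setminus\{a\}\}$, and a short check (in each of the three cases) shows that, together with inverses, this set has at least six distinct non-identity elements. Hence this minor of $\CAY(S,C)$ has minimum degree at least $6$, so more than $3m-6$ edges, again contradicting Euler's Formula; the finitely many cases with $m<12$, essentially $G\cong\Z_2^3$, already follow from the plain bound.

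The crux is this last, degenerate case, where $C$ carries an identity group-coordinate: there $\CAY(S,C)$ is genuinely non-bipartite and has too few edges for Euler's Formula to bite directly, forcing the detour through a contraction onto a Cayley graph of $G$ itself. Everything else is a careful extension of the edge count already used in Proposition~\ref{prop:n<4}.
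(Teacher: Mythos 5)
Your argument is correct, and its skeleton coincides with the paper's: both proofs use Theorem~\ref{thm:project} together with Table~\ref{table} to reduce $\pi_G(C)$ to a three\-/involution generating system $\{a_1,a_2,a_3\}$, both then play the edge count of Lemma~\ref{lem:count} against Euler's Formula, and both end up isolating the same stubborn configuration $C''=\{(a_1,r_1),(a_2,r_1),(a_3,r_1),(e,r_2)\}$ with $5.5m$ edges, for which the plain bound $3|V|-6$ is too weak. Two things are genuinely different. First, you derive triangle\-/freeness from the bipartiteness of the three specific Cayley graphs (the $2n$-prism and the two truncated Archimedean solids), which lets you handle every configuration with $e\notin\pi_G(C)$ in one stroke via the bound $2|V|-4$; the paper instead deduces triangle\-/freeness of each relevant subgraph from the minimality of the order\-/two generating system and runs through the distributions of the $a_i$ over the $r_j$ case by case. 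The two justifications are interchangeable, though yours is arguably cleaner since it avoids the explicit case split. Second, and more substantially, in the degenerate case the paper sharpens Euler's Formula by a face count: each edge $\{(g,r_2),(ga_i,r_1)\}$ lies in only one triangle, so there are at least $\tfrac{3m}{4}$ non\-/triangular faces and hence $|E|\le\tfrac{21}{4}m-6<5.5m$. You instead contract the matching induced by a non\-/central involution $a$ onto the copy $G\times\{r_1\}$ and observe that the resulting graph is $\CAY\bigl(G,\,C_0\cup\{b'a:b'\in C_0\setminus\{a\}\}^{\pm1}\bigr)$, a $6$-regular (or denser) Cayley graph of $G$, already violating $|E|\le 3m-6$. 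This is precisely the ``alternative proof entirely using minors'' that the authors allude to in their concluding remark; it trades the face\-/counting for a small amount of group\-/theoretic verification, namely that the six connection elements are pairwise distinct in each of the three families (which holds because none of these groups is dihedral, so no $a_ia_j$ can equal an $a_k$, and the relevant products have order at least $3$ or come in inverse pairs) and that the one abelian instance $\Z_2\times D_2$, with $m=8$, is already excluded by the plain bound $5.5\cdot 8>3\cdot 16-6$. With those checks spelled out, your proof is complete.
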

\begin{proof}
 Let $S=G\times R_k$ be one of the right groups from the statement and suppose that $C$ is a generating system of $S$
 such that $\CAY(S,C)$ is planar. By Theorem~\ref{thm:project} we know that there is a generating system
$C'\subseteq \pi_G(C)$ of $G$ such that $\CAY(G,C')$ is planar. Comparing with Table~\ref{table} we see that all planar
generating systems for our choice of $G$ consist of three generators all having order two, say $a_1,a_2,a_3$.

If (up to relabeling of $R_k$) we have $C'':=\{(a_1,r_1),(a_2,r_2),(a_3,r_3)\}\subseteq C$ (in particular $k=3$) then we consider the subgraph
$\CAY(S,C'')$ of $\CAY(S,C)$. By Lemma~\ref{lem:count} we know that $\CAY(S,C'')$ has $7.5m$ edges, where $m=|G|$.
Since $\pi_G(C'')$ contains only order $2$ elements and $\pi_G(C'')$ is a minimal generating system of $G$ we get that
$\CAY(S,C'')$ is triangle-free. Hence it has at most $2(mk-2)=6m-4$ edges by Lemma~\ref{lem:Euler} -- a contradiction.

If (up to relabeling of $R_k$) we have
$C'':=\{(a_1,r_1),(a_2,r_2),(a_3,r_2)\}\subseteq C$ then we consider
the subgraph $\CAY(G\times R_2,C'')$ of $\CAY(S,C)$. By
Lemma~\ref{lem:count} we know that $\CAY(G\times R_2,C'')$ has
$4.5m$ edges, where $m=|G|$. As in the previous case $\CAY(G\times
R_2,C'')$ is triangle-free and has most $2(mk-2)=4m-4$ edges by
Lemma~\ref{lem:Euler} -- a contradiction.

If (up to relabeling of $R_k$) $C'':=\{(a_1,r_1),(a_2,r_1),(a_3,r_1),(x,r_2)\}\subseteq C$ again we consider the subgraph
$\CAY(G\times R_2,C'')$ of $\CAY(S,C)$. But now we have to distinguish two subcases:

If $x\neq e$ then by the lower bound in~\eqref{form:count} we know that
$\CAY(G\times R_2,C'')$ has at least $6m$ edges, where $m=|G|$.
On the other hand Lemma~\ref{lem:Euler} gives an upper bound of $6m-6$ -- a contradiction.

If $x=e$, then $\CAY(G\times R_2,C'')$ has $5.5m$ edges and we have to come up with a stronger upper bound than
Lemma~\ref{lem:Euler}
for this particular case. Note that in $\CAY(G\times R_2,C'')$ every edge may appear in two triangles except for
edges of the form
$\{(g,r_2),(ga_i,r_1)\}$ for $i=1,2,3$.
The latter edges appear only in the triangle $\{(g,r_2),(ga_i,r_1),(g,r_1)\}$ and there are $3m$ of them.
We therefore have that the number of triangular faces
$|F_3|$ is bounded from above by $\frac{2|E|-3m}{3}$ and there are at least $\frac{3m}{4}$ larger faces.
Plugging this into Euler's Formula yields $|E|\leq \frac{21}{4}m-6$, which is less than $5.5m$ -- a contradiction.
\end{proof}

We now turn to the remaining cases. Here, edge counting does not
suffice for proving non-planarity. Instead we will use Wagner's
Theorem, i.e., we will find $K_5$ and $K_{3,3}$ minors to prove
non-planarity. (Here, $K_{m,n}$ denotes the complete bipartite graph
with partition sets of size $m$ and $n$, respectively.) First we
prove a lemma somewhat complementary to Lemma~\ref{lem:local}.

\begin{lem}\label{lem:local2}
 Let $G$ be a group with generating system $C'=\{a,b\}$ where $a$ is of order two and $b$ of larger order such
 that the neighborhood
 of some $a$-edge in $\Cay(G,C')$ looks as depicted in Figure~\ref{fig:local2}.
 Then $\CAY(G\times R_k,C)$ is non-planar
 if $C'\subseteq \pi_G(C)$ and $k\geq 2$.
\end{lem}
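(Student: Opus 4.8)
The plan is to exhibit an explicit $K_5$ or $K_{3,3}$ minor inside $\CAY(G\times R_k, C)$ using the local picture of the $a$-edge in $\Cay(G,C')$, and then invoke Wagner's Theorem. Since $C'\subseteq\pi_G(C)$ and $k\ge 2$, we may assume (by passing to a subgraph, which cannot destroy a minor) that our connection set is essentially $\{(a,r_i),(b,r_j)\}$ for some indices $i,j\in[k]$; because the statement must cover all distributions of $a$ and $b$ over the right-zero-band coordinates, the first thing I would do is reduce to the two representative cases $i=j$ and $i\ne j$ (the remaining cases follow by taking further subgraphs or by symmetry). In each case the relevant vertex set is the $a$-edge $\{e,a\}$ together with its $b$-neighbors — the four vertices $b,b^{-1},ab,(ab)^{-1}$ appearing in Figure~\ref{fig:local2} — each replicated into $k\ge 2$ copies $(\,\cdot\,,r_t)$ by the right-zero band.

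Next I would describe the branch sets of the minor concretely. The key structural feature the figure encodes is that around the $a$-edge the incident $b$-arcs do \emph{not} alternate in direction (this is precisely the situation complementary to Lemma~\ref{lem:local}), so when one ``doubles'' the $a$-edge into the two copies $(e,r_1),(e,r_2),(a,r_1),(a,r_2)$ and reconnects the $b$-arcs according to the product rule $(g,r_s)(b,r_j)=(gb,r_j)$, the resulting local graph contains a $K_{3,3}$ (or $K_5$) on these few vertices after contracting the $b$-paths that run between $b^{\pm 1}$ and $(ab)^{\pm 1}$ back onto the core. I would make one branch set the $a$-edge copy $\{(e,r_1),(a,r_1)\}$ contracted to a point, analogously $\{(e,r_2),(a,r_2)\}$, and then use $(b,r_1),(b,r_2)$ (and, if needed, $(ab,r_1)$ or a contracted $b$-path) as the remaining branch vertices; checking that every required pair of branch sets is joined by an edge is then a finite verification read directly off Figure~\ref{fig:local2} together with the definition of the Cayley graph of a right group. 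The case $i\ne j$ is genuinely easier, since the $a$- and $b$-arcs then live on ``different levels'' of the band and the crossing is forced even more directly.

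**The main obstacle** I anticipate is bookkeeping rather than conceptual: one must be careful that the contracted $b$-paths (needed because $b$ may have order larger than $3$, so $b^{-1}$ is not literally adjacent to $e$ by a $b$-arc but only reachable along a directed $b$-cycle) are genuinely vertex-disjoint from the rest of the chosen branch sets, and that no two branch sets accidentally share a vertex once the band coordinates are unpacked — this is where an otherwise-clean $K_{3,3}$ can degenerate. The safest route is to fix the two band coordinates $r_1,r_2$ used by the construction, list the at most ten or twelve vertices involved, write down the adjacencies as dictated by Figure~\ref{fig:local2}, and then simply point to the $K_5$ or $K_{3,3}$ subdivision; since contraction and subgraph-taking only decrease genus, and since adding back the remaining generators of $C$ and the remaining copies $r_3,\dots,r_k$ only adds vertices and edges, non-planarity of $\CAY(G\times R_k,C)$ follows. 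Finally I would remark that this lemma will be applied with $G$ ranging over $\mathbb{Z}_2\times\mathbb{Z}_{2(n+1)}$, $\mathbb{Z}_2\times D_{2n}$, $\mathbb{Z}_2\times A_4$, etc., where every planar generating pair with an order-two generator $a$ and a larger-order generator $b$ can be checked to realize the forbidden local pattern of Figure~\ref{fig:local2}, which is exactly what is needed to finish the non-planarity half of the characterization.
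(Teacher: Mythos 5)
Your overall strategy coincides with the paper's: pass to a subgraph whose connection set is essentially $\{(a,r_i),(b,r_j)\}$, split into the cases $i=j$ and $i\neq j$, and exhibit a $K_5$ or $K_{3,3}$ minor, concluding by Wagner's Theorem. The problem is that the concrete branch sets you propose fail in the case $i=j$, which is precisely where your description is most explicit. If $C''=\{(a,r_1),(b,r_1)\}$, then every arc of $\Cay(G\times R_2,C'')$ ends in $G\times\{r_1\}$, so $(e,r_2)$ and $(a,r_2)$ are \emph{not} adjacent: your branch set $\{(e,r_2),(a,r_2)\}$ induces a disconnected subgraph and cannot be a branch set of a minor. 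Likewise $(b,r_2)$ has in-degree $0$ and out-degree $2$ in that subgraph, so it cannot serve as a singleton branch vertex of $K_{3,3}$. The correct use of the second level here (as in the paper) is as \emph{subdivision} vertices: $(e,r_2)$ supplies a new $2$-path between $(a,r_1)$ and $(b,r_1)$, and $(a,r_2)$ one between $(e,r_1)$ and $(ab,r_1)$; contracting these $2$-paths and the dotted paths of Figure~\ref{fig:local2} produces a $K_5$ living entirely among the $r_1$-level vertices (Figure~\ref{fig:K5}).

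Your claim that the case $i\neq j$ is ``genuinely easier'' also points the wrong way: with $C''=\{(a,r_1),(b,r_2)\}$ no level contains a copy of $\Cay(G,C')$, and every $a$-edge of the pattern (including those hidden in the dotted paths) must be replaced by a $3$-path of the form $(e,r_2),(a,r_1),(e,r_1),(a,r_2)$ before a $K_{3,3}$ minor can be extracted (Figure~\ref{fig:K33}); this is the more delicate of the two cases. Since the entire content of the lemma is the explicit minor, deferring the ``finite verification'' does not close the argument here: carried out on the sets you name, the verification fails, and the branch sets must be redesigned along the lines above.
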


\begin{figure}[ht]
\begin{center}
\includegraphics[width = .4\textwidth]{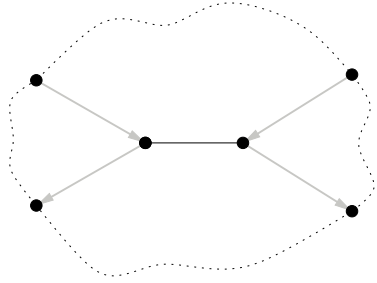}
\caption{The black edge is the $a$-edge, the gray arcs are $b$-arcs, the dotted curves correspond to paths, vertex-disjoint from all other elements of the figure.}\label{fig:local2}
\end{center}
\end{figure}

\begin{proof}
We distinguish two cases of what $C$ looks like:

 If (up to relabeling of $R_k$) we have $C''=\{(a,r_1),(b,r_1)\}\subseteq C$ consider $\Cay(G\times R_2,C'')$.
 This graph contains $\Cay(G\times R_1,C'')\cong \Cay(G,C')$. Consider the $a$-edge from Figure~\ref{fig:local2}, say
 that it connects vertices $(e,r_1)$ (left) and $(a,r_1)$ (right). Add vertices $(e,r_2)$ and $(a,r_2)$ to the picture.
 The first has an edge to $(a,r_1)$ and the bottom-left vertex $(b,r_1)$. The second has an edge to $(e,r_1)$ and the
 bottom-right vertex $(ab,r_1)$. Contracting these two $2$-paths to a single edge each, as well as the left, bottom and
 right dotted path to single edges and the top dotted path to a single vertex we obtain $K_5$, see Figure~\ref{fig:K5}.

  \begin{figure}[ht]
\begin{center}

\psfrag{a1}{$(a,r_1)$}
\psfrag{a2}{$(a,r_2)$}
\psfrag{e1}{$(e,r_1)$}
\psfrag{e2}{$(e,r_2)$}
\psfrag{b1}{$(b,r_1)$}
\psfrag{ab1}{$(ab,r_1)$}
\psfrag{b-11}{$(b^{-1},r_1)$}
\psfrag{b-1a1}{$(b^{-1}a,r_1)$}
\psfrag{b2}{$(b,r_2)$}

\includegraphics[width = .4\textwidth]{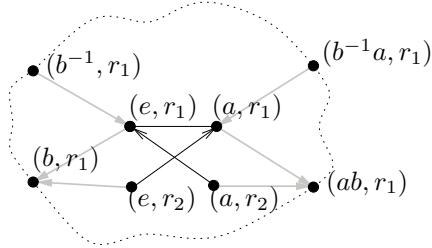}
\caption{Finding $K_5$.}\label{fig:K5}
\end{center}
\end{figure}

 If (up to relabeling of $R_k$) we have $C''=\{(a,r_1),(b,r_2)\}\subseteq C$ again consider $\Cay(G\times R_2,C'')$.
 Denote by $H\subseteq G$ the elements corresponding to the vertices of Figure~\ref{fig:local2} without the two vertical dotted paths. Take the $b$-arcs in $\Cay(H\times\{r_2\},C'')$ and all $a$-edges $\Cay(H\times R_2,C'')$. As in the first case we assume without loss of generality that the central $a$-edge connects from left to right elements $e$ and $a$ in $\Cay(G,C'')$. This edge will now be represented by a $3$-path $(e,r_2),(a,r_1),(e,r_1),(a,r_2)$. Also in the dotted paths we need to replace $a$-edges by some new paths. We do this in the same way as with the central edge. The paths resulting this way from the dotted paths will again be pairwise disjoint and we draw them dotted in Figure~\ref{fig:K33}.
  
 To obtain a $K_{3,3}$-minor focus on the $3$-path $(e,r_2),(a,r_1),(e,r_1),(a,r_2)$ representing the central $a$-edge in our argument. We include the $b$-arcs $((e,r_1),(b,r_2))$ and $((a,r_1),(ab,r_2))$ starting from the inner vertices of this $3$-path.  

 \begin{figure}[ht]
\begin{center}

\psfrag{a1}{$(a,r_1)$}
\psfrag{a2}{$(a,r_2)$}
\psfrag{e1}{$(e,r_1)$}
\psfrag{e2}{$(e,r_2)$}
\psfrag{b1}{$(b,r_1)$}
\psfrag{ab1}{$(ab,r_1)$}
\psfrag{b-11}{$(b^{-1},r_1)$}
\psfrag{b-1a1}{$(b^{-1}a,r_1)$}
\psfrag{b2}{$(b,r_2)$}

\includegraphics[width = .4\textwidth]{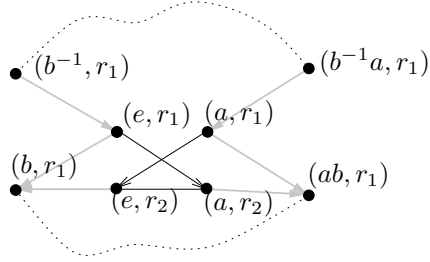}
\caption{Finding $K_{3,3}$.}\label{fig:K33}
\end{center}
\end{figure}

 Contract the dotted path between $(b,r_2)$ and $(ab,r_2)$ and the one between $(b^{-1},r_2)$  and $(ab^{-1},r_2)$ to a single edge, respectively. Last, we contract the two arcs $((b^{-1},r_2), (e,r_2))$ and $((ab^{-1},r_2),(a,r_2))$. The resulting graph is $K_{3,3}$.
\end{proof}

The lemma yields:

\begin{prop}\label{prop:nonplanar2}
 The right groups $\Z_2\times H\times
R_k$ with $H$ any of $\mathbb{Z}_{2n},A_4$ where $n\geq 2$ and $k=2,3$ are
not planar.
\end{prop}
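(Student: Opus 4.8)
The plan is to reduce to Lemma~\ref{lem:local2} via Theorem~\ref{thm:project} and Maschke's Theorem. Suppose, for contradiction, that $\CAY(S,C)$ is planar, where $S=\Z_2\times H\times R_k$ with $H\in\{\Z_{2n},A_4\}$, $n\geq 2$, $k\in\{2,3\}$; put $G:=\Z_2\times H$. By Theorem~\ref{thm:project} the graph $\CAY(G,\pi_G(C))$ is planar, so the generating set $\pi_G(C)$ contains a minimal generating system $C'$ of $G$, and since $\CAY(G,C')$ is a subgraph of $\CAY(G,\pi_G(C))$ it is planar as well; hence $(G,C')$ occurs in Table~\ref{table}. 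Reading off the table, up to an automorphism of $G$ (which induces a graph isomorphism of the corresponding Cayley graphs taking $a$-edges to $a$-edges) the only possibility is $C'=\{a,b\}$ with $a=(1,0)$ and $b=(0,1)$ of order $2n$ when $G=\Z_2\times\Z_{2n}$, and $C'=\{a,b\}$ with $a=(1,(12)(34))$ and $b=(0,(123))$ of order $3$ when $G=\Z_2\times A_4$. In both cases $a^2=e$, $b$ has order at least $3$, and $C'\subseteq\pi_G(C)$, so it only remains to exhibit an $a$-edge of $\CAY(G,C')$ whose neighbourhood is as in Figure~\ref{fig:local2}; then Lemma~\ref{lem:local2} (using $k\geq 2$) gives that $\CAY(S,C)$ is non-planar, a contradiction.

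For $G=\Z_2\times\Z_{2n}$ the graph $\CAY(G,C')$ is the $2n$-prism: two $2n$-cycles (the $b$-orbits) joined by the perfect matching of $a$-edges. Fixing one matching edge between $e$ and $a$, the incident $b$-arcs are $b^{-1}\to e\to b$ and $ab^{-1}\to a\to ab$ along the two cycles; the six vertices $e,a,b,b^{-1},ab,ab^{-1}$ are pairwise distinct exactly because $2n\geq 4$ (together with $b^{\pm 2}\neq a$, immediate from the first coordinate); and the four paths required by Figure~\ref{fig:local2} are the two matching edges $\{b,ab\}$ and $\{b^{-1},ab^{-1}\}$ and the two long arcs of the cycles joining $b$ to $b^{-1}$ and $ab$ to $ab^{-1}$ — these are pairwise internally vertex-disjoint and avoid $e$ and $a$ once $2n\geq 4$. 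For $G=\Z_2\times A_4$ the graph $\CAY(G,C')$ is the graph of the truncated cube: the $b$-orbits $\{g,gb,gb^{2}\}$ are $8$ triangular faces, joined by the $a$-edges in the pattern of the edges of the $3$-cube. Fixing an $a$-edge between $e$ and $a$ (joining the triangle $\{e,b,b^{-1}\}$ to $\{a,ab,ab^{-1}\}$), the two octagonal faces through this edge yield two internally vertex-disjoint paths joining $b$ to $ab$ and $b^{-1}$ to $ab^{-1}$ (they pass through four further, pairwise distinct triangles, distinct because the $3$-cube has girth $4$); together with the two triangle edges $\{b,b^{-1}\}$, $\{ab,ab^{-1}\}$ and the $b$-arcs $b^{-1}\to e\to b$, $ab^{-1}\to a\to ab$ this reproduces Figure~\ref{fig:local2}, the six vertices being distinct since $b$ has order $3$.

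I expect the only substantive point to be this last step — checking that the neighbourhood of a suitable $a$-edge is really the configuration of Lemma~\ref{lem:local2}. For the prism it is essentially pictorial; for $\Z_2\times A_4$ one has to use the nonabelian structure of $A_4$ and the combinatorics of the truncated cube, verifying both that the six relevant vertices are distinct and that the four connecting paths are internally vertex-disjoint. Everything else is a short assembly of Theorem~\ref{thm:project}, Maschke's Theorem, and Lemma~\ref{lem:local2}; note that the same argument yields non-planarity for every $k\geq 2$, not just $k=2,3$.
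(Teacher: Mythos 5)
Your proposal is correct and follows exactly the paper's route: reduce via Theorem~\ref{thm:project} to a planar generating system of $G=\Z_2\times H$, read off from Table~\ref{table} that this system is unique (two generators $a,b$ with $a^2=e$ and $b$ of order $\geq 3$), verify the configuration of Figure~\ref{fig:local2} at an $a$-edge, and invoke Lemma~\ref{lem:local2}. The only difference is that you spell out the verification of the local configuration (the $2n$-prism and the truncated-cube octagons pairing $b$ with $ab$ and $b^{-1}$ with $ab^{-1}$), which the paper dismisses as "easy to check directly" and "see Figure~\ref{fig:A42}"; your explicit check is accurate.
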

\begin{proof}
 Let $S=G\times R_k$ be one of the right groups from the statement and suppose that $C$ is a generating system of $S$
 such that $\CAY(S,C)$ is planar. By Theorem~\ref{thm:project} we know that there is a generating system
$C'\subseteq \pi_G(C)$ of $G$ such that $\CAY(G,C')$ is planar. Comparing with Table~\ref{table} we see that for $G\in\{\Z_2\times\Z_{2n},\Z_2\times A_4\}$ there is exactly one planar generating system. The preconditions of Lemma~\ref{lem:local2} are
satisfied for $G=Z_2\times\mathbb{Z}_{2n}$, which is easy to check directly and for
$G=\Z_2\times A_4$ we refer to Figure~\ref{fig:A42}. Thus, the respective Cayley graphs cannot be planar.
\end{proof}

Now we have proved:
 \begin{thm}\label{thm:nonplanar}
  The right groups $\Z_2\times H\times R_k$ with $H\in
 \{\Z_{2(n+1)},D_{2n},A_4,S_4,A_5\}$, $n\geq 1$, and $k\geq 2$ are not planar.
\end{thm}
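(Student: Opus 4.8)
The plan is to prove Theorem~\ref{thm:nonplanar} simply by assembling the pieces already established. The key observation is that the family of right groups named in the theorem, namely $\Z_2\times H\times R_k$ with $H\in\{\Z_{2(n+1)},D_{2n},A_4,S_4,A_5\}$, $n\geq 1$, $k\geq 2$, splits naturally into the cases already handled by Proposition~\ref{prop:nonplanar1} and Proposition~\ref{prop:nonplanar2}, together with a reduction from $k\geq 4$ to $k\le 3$ coming from Proposition~\ref{prop:n<4}. So the first step is to invoke Proposition~\ref{prop:n<4}: since each group $\Z_2\times H$ in question is nontrivial, if $\Z_2\times H\times R_k$ were planar then $k\le 3$. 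Hence it suffices to treat $k\in\{2,3\}$.

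Next I would partition the list of groups $H$. For $H\in\{D_{2n},S_4,A_5\}$ (with $n\geq 1$), the right groups $\Z_2\times H\times R_k$ with $k\in\{2,3\}$ are exactly those covered by Proposition~\ref{prop:nonplanar1}, which establishes non-planarity by edge-counting (via Lemma~\ref{lem:count} and Euler's Formula, Lemma~\ref{lem:Euler}). For $H\in\{\Z_{2(n+1)},A_4\}$ (with $n\geq 1$, so $\Z_{2(n+1)}=\Z_{2n'}$ with $n'=n+1\geq 2$), the right groups $\Z_2\times H\times R_k$ with $k\in\{2,3\}$ are precisely those covered by Proposition~\ref{prop:nonplanar2}, which uses the $K_5$/$K_{3,3}$-minor construction of Lemma~\ref{lem:local2}. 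Writing this out: for $G=\Z_2\times\Z_{2n'}$ the planar Cayley graph is unique (a prism graph, from Table~\ref{table}) and one checks directly that an $a$-edge has the neighborhood required by Lemma~\ref{lem:local2}; for $G=\Z_2\times A_4$ the unique planar generating system again exhibits the configuration of Figure~\ref{fig:local2}, so Lemma~\ref{lem:local2} applies with $C'\subseteq\pi_G(C)$ and $k\geq 2$.

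Since $\{\Z_{2(n+1)}:n\geq 1\}\cup\{D_{2n}:n\geq 1\}\cup\{A_4,S_4,A_5\}$ is exactly the union of the group-lists appearing in Propositions~\ref{prop:nonplanar1} and~\ref{prop:nonplanar2}, and the two propositions jointly cover $k=2$ and $k=3$ for every such group, combining them with Proposition~\ref{prop:n<4} yields non-planarity for all $k\geq 2$. This completes the proof.

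The theorem itself is therefore not where the real work lies — it is a bookkeeping corollary. The main obstacle was absorbed earlier, in two places: first, in Theorem~\ref{thm:project}, which guarantees that a planar Cayley graph of $S=G\times R_k$ forces a planar Cayley graph of $G$ via a minor, so that Maschke's classification (Table~\ref{table}) pins down the only candidate generating systems for $G$ to finitely many shapes; and second, in Lemma~\ref{lem:local2}, whose $K_{3,3}$-construction is the delicate part, since one must route the dotted paths of Figure~\ref{fig:local2} through the two copies $G\times\{r_1\}$ and $G\times\{r_2\}$ while keeping them vertex-disjoint, and verify that the contractions indeed produce $K_{3,3}$ rather than a proper minor or a multigraph. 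If I were writing the paper fresh, I would expect to spend essentially all my effort verifying that lemma and confirming that the neighborhood hypothesis of Figure~\ref{fig:local2} genuinely holds for the prism graphs and for $\Cay(\Z_2\times A_4,\{(0,(123)),(1,(12)(34))\})$; the theorem then follows by the case split above with no further ideas needed.
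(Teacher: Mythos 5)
Your proposal is correct and matches the paper exactly: the theorem is stated there with no separate proof ("Now we have proved:"), being precisely the assembly of Proposition~\ref{prop:n<4} (reducing to $k\le 3$), Proposition~\ref{prop:nonplanar1} (for $H\in\{D_{2n},S_4,A_5\}$), and Proposition~\ref{prop:nonplanar2} (for $H\in\{\Z_{2(n+1)},A_4\}$). Your accompanying remarks about where the real work lies (Theorem~\ref{thm:project} and Lemma~\ref{lem:local2}) are also an accurate reading of the paper's structure.
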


\section{Conclusions}\label{sec:con}
From the previous results (Theorem~\ref{thm:positiv} and Theorem~\ref{thm:nonplanar}) we get our main theorem:
\begin{thm}\label{thm:charact} The planar right groups $G\times R_k$ with $k\geq 2$ are exactly of the form
$G\in\{\{e\},\mathbb{Z}_n, D_n, S_4, A_4, A_5\}$ and $k\leq 3$ and $\{e\}\times R_4$. Here $\{e\}$ denotes
the one-element group.
\end{thm}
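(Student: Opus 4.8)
The plan is to assemble Theorem~\ref{thm:charact} directly from the two halves that have been established: Theorem~\ref{thm:positiv} on the positive side and Theorem~\ref{thm:nonplanar} together with Proposition~\ref{prop:n<4} on the negative side, after first reducing via Theorem~\ref{thm:project} to the case where the group factor is planar. So the first step is: given a planar right group $S = G \times R_k$ with $k \geq 2$, apply Lemma~\ref{lem:strong} to see that any planar generating system $C$ has $\pi_G(C)$ generating $G$ with $\Cay(S,C)$ strongly connected, and then Theorem~\ref{thm:project} to conclude that $\CAY(G,\pi_G(C))$ is a planar Cayley graph of $G$; hence $G$ is a planar group, so $G$ appears in Maschke's Table~\ref{table}.

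Next I would split the list of planar groups in Table~\ref{table} into the ones we want to keep and the ones we must exclude. The groups in Table~\ref{table} are, up to the isomorphisms $\Z_2 \times \Z_2 \cong D_4$, $\Z_2\times \Z_{2n+1}\cong \Z_{4n+2}$, and $\Z_2 \times D_{2n+1} \cong D_{4n+2}$ recalled at the start of Section~\ref{sec:nonplanar}: the ``small'' family $\{e\}, \Z_n, D_n, A_4, S_4, A_5$ on the one hand, and on the other hand the genuinely new groups $\Z_2 \times \Z_{2(n+1)}$, $\Z_2 \times D_{2n}$, $\Z_2 \times A_4$, $\Z_2 \times S_4$, $\Z_2 \times A_5$ (with $n \geq 1$). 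For the second family, Theorem~\ref{thm:nonplanar} says precisely that $\Z_2 \times H \times R_k$ is non-planar for $H \in \{\Z_{2(n+1)}, D_{2n}, A_4, S_4, A_5\}$ and $k \geq 2$, which rules every one of them out. So $G$ must lie in $\{\{e\}, \Z_n, D_n, A_4, S_4, A_5\}$.

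Then I would pin down $k$: Proposition~\ref{prop:n<4} gives that for $G$ nontrivial $k \leq 3$, while for $G = \{e\}$ the Cayley graph $\CAY(\{e\}\times R_k, R_k) \cong K_k$ forces $k \leq 4$. Combined with the previous paragraph, this shows every planar right group $G \times R_k$ with $k \geq 2$ is of one of the two stated forms. For the converse — that all the listed right groups are indeed planar — I would simply invoke Theorem~\ref{thm:positiv}, which already asserts planarity of $G \times R_k$ for $G \in \{\{e\}, \Z_n, D_n, S_4, A_4, A_5\}$ with $k \leq 3$, and of $\{e\} \times R_4$. Putting the two directions together yields the characterization.

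I do not expect a genuine obstacle here: all the real work — the planar constructions of Section~\ref{sec:planar}, the reduction to planar group factors in Section~\ref{sec:planarfromplanar}, and the non-planarity arguments of Section~\ref{sec:nonplanar} — is already done. The only mild care needed is bookkeeping: making sure the isomorphism identifications ($\Z_2\times\Z_2\cong D_4$, $\Z_2\times\Z_{2n+1}\cong\Z_{4n+2}$, $\Z_2\times D_{2n+1}\cong D_{4n+2}$) are applied so that the non-planar family in Theorem~\ref{thm:nonplanar} exactly complements the planar family in Theorem~\ref{thm:positiv} with no group left unclassified, and noting that the hypothesis $k \geq 2$ means the degenerate case $k=1$ (where $G \times R_1 \cong G$ and planarity is just Maschke's theorem) is deliberately excluded from the statement.
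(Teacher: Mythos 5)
Your proposal is correct and follows exactly the paper's own route: the paper derives Theorem~\ref{thm:charact} by combining Theorem~\ref{thm:positiv} (positive constructions) with Theorem~\ref{thm:nonplanar} and Proposition~\ref{prop:n<4} (exclusions, after reducing to planar group factors via Theorem~\ref{thm:project} and the isomorphism bookkeeping done at the start of Section~\ref{sec:nonplanar}). You merely spell out the assembly in more detail than the paper does; the only nit is that $\Z_2\times\Z_2\cong D_2$ (not $D_4$), a typo you inherited from the paper and which does not affect the argument.
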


\begin{rem}
The non-planarity proof for the right groups in
Section~\ref{sec:nonplanar} is slightly longer and more involved
than an alternative proof entirely using minors. We hope that the
present proof is generalizable to higher and maybe even
non-orientable genus.
\end{rem}

Before going into further questions let us repeat the conjecture made in this paper. Recall that for $C\subseteq S=G\times R_k$ we denote by $\pi_G(C):=\{g\in G\mid (g,r_i)\in C$ for some $i\in[k]\}$.
\begin{conj*}
 Let $S=G\times R_k$ and $C$ a generating set. Then $\CAY(G,\pi_G(C))$ is a minor of $\CAY(S,C)$.
\end{conj*}
Answering this conjecture in the affirmative would be particularly interesting for relating the genus of right groups to the genus of groups, as done in the present paper for the planar case.

We conclude with a list of problems, starting pretty close to this paper's results but then going further into the exciting area of Cayley graphs of semigroups and their topology. 
%
%

\begin{quest}
We think the genus of the non-planar right groups from Theorem
\ref{thm:nonplanar} is rather large. Which genus do they have?
\end{quest}

\begin{quest}
It is easy to see, that the right groups $\Z_n \times R_2$ and $R_2,
R_3$ are exactly the outerplanar (non-group) right groups. Can we characterize outerplanar semigroups? Solomatin has obtained first results
in~\cite{Sol-11}. His paper in fact contains and relies on many results on planar
finite free commutative semigroups, which have been proved in other
 not easily accessible places.
\end{quest}

\begin{quest}
 The planar groups are exactly the discrete subgroups of isometries of the sphere $O(3)$. The planar Cayley graphs of such a group $G$ arise in the following way: Chose a region whose translates with respect to $G$ tessellate the sphere. The Cayley graph is the dual graph of the resulting tessellation. Is there an analogue construction for planar semigroups?
\end{quest}

\begin{quest}The graphs of all Archimedean and Platonic solids are Cayley graphs
of a group with minimal generating system with three exceptions: the
Dodecahedron, the Icosidodecahedron, and the antiprisms (in
particular the tetrahedron). The antiprism is the Cayley graph of a
group with non-minimal generating system though, e.g.,
$\CAY(\mathbb{Z}_{2n},\{1,2\})$. The other two are not even such. Are
they underlying graphs of directed Cayley graphs of semigroups (with
minimal generating system)? It has been shown in~\cite{Hao-11} that the Dodecahedron graph is an induced subgraph of a Brandt semigroup Cayley graph.
\end{quest}

\begin{quest}
As above,  a natural questions arising in this type of study is, whether a given graph is the graph of a semigroup. Find Sabidussi's Theorem~\cite{Sab-58} for semigroups, i.e., 
an abstract characterization of Cayley graphs of semigroups.
It is clear that the Cayley graph of a semigroup $S$ has a homomorphic image $S'$ of $S$ as a
subsemigroup of its endomorphism monoid. What else has to be asked
for to make this a sufficient condition? There has been some work
into that direction (\cite{Kel-02,Kel-03,Kel-06,Sol-11} and Chapter
11.3 of~\cite{Kna-11}).
\end{quest}

\begin{quest}
 Another pretty different but also natural notion of planarity for a (semi)group is to require that the Hasse diagram of its lattice of sub(semi)groups be planar. Groups that are planar in this sense have been completely classified recently~\cite{Boh-06}. How about semigroups? 
\end{quest}

\subsubsection*{Acknowledgements.}
The exposition of the paper greatly benefited from the valuable comments of an anonymous referee.

\bibliography{lit}
\bibliographystyle{amsplain}

\end{document}